\newtheorem{proposition}{Proposition}
\newtheorem{theorem}{Theorem}
\newtheorem{lemma}{Lemma}
\newtheorem{remark}{Remark}
\newcommand{\p}{\mathbbm{P}}
\newcommand{\e}{\mathbbm{E}}
\newcommand{\N}{\mathbbm{N}}
\newcommand{\R}{\mathbbm{R}}
\newcommand{\A}{\mathcal{A}}
\begin{document}

\title{Partial geodesics on symmetric groups endowed with breakpoint distance\thanks{Partially supported by CNPq, FAPERJ and NSERC. DS holds the Canada Research Chair in Mathematical Genomics.}}

\author{Poly H. da Silva, Arash Jamshidpey and David Sankoff}
 
\maketitle

\begin{abstract}
The notion of partial geodesic (or geodesic patch) was introduced by Jamshidpey et al. in ``Sets of medians in the non-geodesic pseudometric space of unsigned genomes with breakpoints''~\cite{jam14}. In this paper, we study the density of points on  non-trivial partial geodesics between two permutations $\xi_1^{(n)}$ and $\xi_2^{(n)}$ chosen uniformly and independently at random from the symmetric group $S_n$, where $S_n$ is endowed with the breakpoint distance. 

For a permutation $\pi := \pi_1 \  ... \ \pi_n$, any unordered pair $\{\pi_i , \pi_{i+1}\}$, for $i=1, ..., n-1$, is called an adjacency of $\pi$. The set of all adjacencies of $\pi$ is denoted by $\mathcal{A}_\pi$. Denote by $id^{(n)}$ the identity permutation, and let $I_n$ be an arbitrary subset of $\mathcal A_{id^{(n)}}$. We classify the set of all adjacencies of a permutation $\pi \in S_n$ into four types, with respect to $I_n$. Then for a permutation $\xi^{(n)}$ chosen uniformly at random from $S_n$, 
we derive a convergence theorem for the normalized number (after dividing by $n$) of adjacencies of each type in $\xi^{(n)}$
 with respect to $I_n$ (for some random or deterministic choices of $I_n$), as $n\rightarrow \infty$. We also see an application of this convergence theorem to find the appropriate choices of $I_n$. 
 
A geodesic point of $u$ and $v$ in a pseudometric space $(S,\rho)$ is a point $w$ of the space that $\rho(u,w)+\rho(w,v)=\rho(u,v)$. In other words, a point is a geodesic point of $u$ and $v$ if and only if it is located on a partial geodesic between $u$ and $v$. We find an upper bound for the number of permutations $x\in S_n$ for which there exists at least one non-trivial geodesic point between $id^{(n)}$ and $x$, far from both. This partially verifies the conjecture of Haghighi and Sankoff stated in ``Medians seek the corners, and other conjectures''~\cite{haghighi12}, namely we prove that, with high probability, there is no breakpoint median of two permutations $\xi_1^{(n)}$ and $\xi_2^{(n)}$ chosen uniformly and independently at random from $S_n$, far from both of them.
\end{abstract}


\section{Introduction}\label{int}
When there is no duplication, linear unichromosomal genomes are represented by permutations, where each number represents a gene or a marker. To compare two linear unichromosomal genomes with an identical set of genes, one can count the number of their dissimilarities or \textit{breakpoints}. More precisely, for two linear unichromosomal genomes $G$ and $G'$ with the same set of genes, a pair of adjacent genes in $G$ is called a breakpoint of $G$ with respect to $G'$, if these genes are not adjacent in $G'$. It is clear that the number of breakpoints of $G$ with respect to $G'$ is equal to the number of breakpoints of $G'$ with respect to $G$. Introduced by Sankoff and Blanchette in~\cite{sankoff97}, 1997, the breakpoint distance is the number of breakpoints in the set of gene adjacencies of two unichromosomal genomes with an identical set of genes. 

On the other hand, we can use the definition of \emph{median} to compare more than two genomes, namely, having a set of genomes $A=\{G_1,...,G_k\}$ (all genomes are in the symmetric group $S_n$) and a genomic distance $d$, a median of $A$ is a genome that minimizes the total distance function $d_T(\cdot,A):=\sum_{i=1}^k d(\cdot,G_i)$. The minimum value of $d_T(\cdot,A)$ is called the median value of the set $A$. Motivated by the Steiner points, the median problem is the problem of finding a median for a given set $A$ of genomes. The median problem, has been used for the first time  by Sankoff \emph{et al.}~\cite{sankoff96}, for evolutionary models of gene orders. The goal was to obtain more information about the ancestors of a given set of genomes and also to apply it to \emph{small phylogeny problems}. In the small phylogeny problem the topology of the ancestral tree is given and the ancestral nodes (vertices of degree greater than 1) should be estimated such that the total sum of distances over all pairs of neighbours in the tree attains its minimum. 
The tree obtained in this way is the closest tree to preserve the parsimony principle on its paths. 

The median problem has been extensively studied for different genome distances, and for many of them including the breakpoint distance on linear unichromosomal genomes, it is shown that the median problem is NP-hard~\cite{bryant98,caprara03,tannier09,fertincombinatorics}. This paper concerns the breakpoint median problem for linear unichromosomal genomes represented by unsigned permutations. 


Despite of its importance in parsimony-based phylogenetics, the median suffers from several disadvantages. The first one is that it is very hard to find a median for most genomic distances. In fact, as we mentioned, the median problem is NP-hard in many cases. Another problem is that although a median genome may carry valuable information from all given genomes (inputs), it is not necessarily close to the ancestral genome. In other words, it is not a good estimator for the true ancestor. Zheng and Sankoff~\cite{zheng11} provided some simulation studies, for a random model of evolution, showing that their heuristic median does not approximate the ancestor for the long-time evolution of genomes, while for genomes involved in evolution for a shorter period of time, medians may approximate the true ancestor. Later, Jamshidpey and Sankoff~\cite{jam13} proved that when the evolution is modelled by some continuous time random walks on $S_n$ (group of permutations of length $n$), including reversal, DCJ, and transposition random walks (here by transposition we mean the mathematical transposition), until time $cn$ of the evolution, for $c<1/4$, the true ancestor can be approximated asymptotically almost surely by a median while for $c>0.61$, the medians are not close to the true ancestor. They conjectured that the median solutions lose their credibility to approximate the ancestor right after $n/4$. It is worth mentioning that, although the medians will not be useful to approximate the true ancestor for some random evolutionary models, they may still carry some important information about ancestors. More recently, Jamshidpey and Sankoff found all possible positions of asymptotic medians of $k$ random permutations sampled from high speed random walks \cite{jam16, jam17}. Determining all possible locations of medians with respect to a random sample of genomes, their results significantly reduce the median search space for a number of edit distances on groups of permutations or signed permutations. Another obstacle about the median is that they are not unique and different medians may be of considerable distance from each other~\cite{haghighi12}. Then, for a set of genomes having many medians it is not clear which of them is the closest to the ancestor. Still, another concern is that not all the medians carry useful information about the ancestor or input genomes. 
Following some simulation studies, Haghighi and Sankoff~\cite{haghighi12} conjectured that a major proportion of breakpoint medians of $k$ random permutations lie around these $k$ random permutations (corners), and so most of breakpoint medians for random genomes just have information about one of them. However, in their simulations they observed that even it is a minority, there still exist medians that are far from any of these $k$ random permutations, and from the biological point of view, studying these medians is more interesting since they have information from all of the given permutations. They observed that as the size of permutations increases, the proportion of these medians far from the corners decreases. Jamshidpey et al.~\cite{jam14} investigated this conjecture further and found a family of breakpoint median points using the new concept of \emph{accessible points.} This concept may also help us to find a median far from corners. They partially proved the conjecture stated in~\cite{haghighi12}, that the median value of $k$ permutations chosen uniformly at random from $S_n$ is almost $(k-1)(n-1)$ ($2n$ for three random permutations), with high probability, after a convenient rescaling of the breakpoint distance. They showed that any accessible point from a set of $k$ random permutations is an asymptotic median of those $k$ random permutations, with high probability. They proved that any median of $k$ random permutations must take almost all of its adjacencies from at least one of the $k$ random permutations. Making use of this mathematical property in~\cite{jam14}, Larlee \emph{et al.}~\cite{larlee14} proposed a construction for a genome which includes gene order information from all three given genomes such that the total distance is approximately $2.25 n$, where $n$ is the size of the permutations, that is $0.25n$ bigger than the median value.


Motivated by the conjecture of Haghighi and Sankoff in \cite{haghighi12}, one of the objectives of this paper is to study this conjecture starting with two random permutations, as a first step, and in doing so, construct tools and results that can be used later to help in the general problem (for more than two permutations). In particular, in this paper, we study the accessible points of two random permutations. 
We introduce different notions to study the breakpoint median of two or more number of permutations. We provide an equivalence definition for the concept of accessibility of two permutations. Given a subset $I$ of adjacencies of 
the identity permutation $id=id^{(n)}$ (later we call this kind of subsets, segment sets), we classify the set of all adjacencies of the symmetric group $S_n$, with respect to $I$, into four types. Then for a permutation $\xi^{(n)}$ chosen uniformly at random from $S_n$ we compute the expectation and variance of the number of adjacencies of each type in $\xi^{(n)}$. We derive a convergence theorem for the normalized number (after dividing by $n$) of different types of adjacencies of $\xi^{(n)}$
with respect to $I$ (for both random or deterministic choice of $I$). This leads us to discuss further about the possible segment sets $I$ chosen from identity for which one can construct a permutation $\pi$ in the set of all permutations lying on partial geodesics connecting $id$ and $\xi^{(n)}$, denoted by $ \overline{[id, \xi^{(n)}]}$, such that the set of adjacencies of $\pi$ contains $I$ and the remaining adjacencies of $\pi$ are contained in the set of adjacencies of $\xi^{(n)}$. Taking convenient segment sets $I$ (whose size is neither very small nor very big) we can say that 
$\pi$ is located far from $id$ and $\xi^{(n)}$. In this way, we can estimate an upper bound for the probability of existence of a permutation $\pi$ in $\overline{[id,\xi^{(n)}]}$, far from corners. We see that this probability converges to $0$, as $n$ tends to $\infty$.

\section{Preliminaries}\label{prem}
A permutation of length $n$ is a bijection on $[n]:=\{1,...,n\}$. A permutation $\pi$ is denoted by
\[
{1 \ \ 2 \ \ ... \ \ \ n \choose \pi_1 \ \pi_2 \ \ ... \ \ \pi_n},
\]
or simply by $\pi_{1} \ \pi_2 \ \ ... \ \ \pi_{n}$. We represent a linear unichromosomal genome with $n$ genes or markers by a permutation of length $n$. Each number represents a gene or a marker in the genome. The set of all permutations of length $n$ with the function composition operator is a group called the \textit{symmetric group} of order $n$ denoted by $S_n$. We denote by $id:=id^{(n)}$ the identity permutation $1 \ 2 \ 3 \ ... \ n$. For a permutation $\pi := \pi_1 \  ... \ \pi_n$, any unordered pair $\{\pi_i , \pi_{i+1}\}=\{\pi_{i+1} , \pi_i\}$, for $i=1, ..., n-1$, is called an adjacency of $\pi$. We denote by $\mathcal{A}_{\pi}$ the set of all adjacencies of $\pi$ and by $\mathcal A_{x_1,...,x_k}$ the set of all common adjacencies of $x_1,...,x_k\in S_n$. For any $x,y \in S_n$, the \textit{breakpoint distance} (bp distance) between $x$ and $y$ is defined by $d^{(n)}(x,y):=n-1-|\mathcal A_{x,y}|$ which is a pseudometric. 
We say a pseudometric (or a metric) $\rho$ is left-invariant on a group $G$ if for any $x,y,z \in G$, $\rho(x,y)=\rho(zx, zy)$. The bp distance is a left-invariant pseudometric on $S_n$. We say two permutations $\pi$ and $\pi'$ in $S_n$ are equivalent, denoted by $\pi\sim \pi'$, if $d^{(n)}(\pi,\pi')=0$. In other words they are equivalent if $\pi_i=\pi'_{n+1-i}$, for $i=1,...,n$. The equivalence class containing permutation $\pi$ is denoted by $[\pi]$. The set of all equivalence classes of $S_n$ under $\sim$, denoted by $\hat{S}_n:=S_n/\sim$, endowed with $d^{(n)}$ is a metric space.

A discrete metric space $(S, \rho)$ (i.e. a metric space $S$ with metric $\rho: S\times S \rightarrow \N_0:=\N\cup \{0\}$) is said to be a \emph{discrete geodesic space}, if for any two points $x,y\in S$, there exists a finite subset of $S$ containing $x$ and $y$ that is isometric with the discrete line segment $[0, 1, ..., \rho(x, y)]$ ($\N_0$ is endowed with the standard metric $dist(m, n) :=|m-n|$). In other words, it is a geodesic space if for any two points $x,y \in S$ with $\rho(x,y)=k \in \N_0$, there exists a finite chain of length $k$ in $S$, namely $z_0=x,z_1...,z_k=y$, such that $\rho(z_i,z_{i+1})=1$, for $i=0,...,k-1$. Any chain in $S$ with this property is called a geodesic between $x$ and $y$. Indeed, a countable metric space is a discrete geodesic if and only if it is isometric with a connected graph. Of course, one side of this is more obvious. For the other side (sufficiency), construct a graph $G$ from a countable discrete geodesic metric space $(S,\rho)$ whose vertices are points of $S$ and a pair of points $x,y\in S$ are connected by an edge if $\rho(x,y)=1$. The graph $G$ endowed with the graph distance is isometric with $(S,\rho)$, as the shortest paths between two vertices $x,y$ coincide with the geodesics between $x,y\in S$.

When a discrete metric space $(S, \rho)$ is not geodesic, as for the case of $\hat S_n$ endowed  with bp-distance~\cite{jam14}, the concept of a geodesic between two points $x$ and $y$ can be extended to the concept of a \emph{partial geodesic} or \emph{geodesic patch} (p-geodesic)~\cite{jam14}, that is a maximal subset of S containing $x$ and $y$ which is isometric to a subsegment (not necessarily contiguous) of the line segment $[0,...,\rho(x,y)]$. In other words, a p-geodesic between $x$ and $y$ is a maximal chain $z_0=x,z_1,...,z_k=y$ in $S$ such that 
\[
\sum\limits_i \rho(z_i,z_{i+1})=\rho(x,y).
\]
Note that the former form of the definition is very general and can be extended to general metric spaces, i.e. for a general metric space a p-geodesic between two points $x$ and $y$ is the maximal subset of the metric space which can be isometrically embedded into the real interval $[0,\rho(x,y)]$ (where $\R_+$ is endowed with the Euclidean topology). Since, our spaces of interest are the finite symmetric groups, we only work on discrete metric spaces in this paper, and so the second form of the definition for p-geodesics is suitable for us. 

For any two points $x,y$ in an arbitrary metric space $(S, \rho)$ there exists at least one p-geodesic between them, since the trivial chain of length one, $z_0=x,z_1=y$, always exists. If this chain is maximal then the p-geodesic $z_0=x,z_1=y$ is called trivial. Only non-trivial p-geodesics, those containing at least three points of the space, are interesting for us. Any point on a p-geodesic between $x$ and $y$ is called a geodesic point of $x$ and $y$. In the case of permutations (or permutation classes), we also call a geodesic point, a geodesic permutation (or a geodesic permutation class). Note that any geodesic is a p-geodesic, and for any geodesic point of $x$ and $y$, say $z$, we have $\rho(x,y)=\rho(x,z)+\rho(z,y)$.
We denote by $\overline {[x,y]}_S$ the set of all geodesic points of $x$ and $y$ in a metric space $(S,\rho)$, and in particular for $x,y\in S_n$, we denote by $\overline {[x,y]}^*$ or $\overline {[[x],[y]]}^*=\overline {[u,v]}^*$ the set of all geodesic points of $u=[x],v=[y]\in \hat S_n$, that is the set of all permutation classes lying on partial geodesics connecting $[x]$ and $[y]$ in $\hat S_n$. In addition, for $x,y\in S_n$, we denote
\[
\overline {[x,y]}:=\{z\in S_n: d^{(n)}(x,y)=d^{(n)}(x,z)+d^{(n)}(z,y)\}.
\]
In other words, $z\in \overline {[x,y]}$ if and only if $[z]\in \overline {[x,y]}^*$.
For a metric (or pseudometric) space $(S,\rho)$, let us define the total distance of a point $x\in S$ to a finite subset $A\subset S$ by
\[
\rho_T(x,A):=\sum\limits_{y\in A} \rho(x,y).
\]
A \emph{median} of a finite subset $A\subseteq S$  is a point of $S$ (not necessarily unique) whose total distance to $A$ takes the infimum (respectively, minimum for a finite space $S$), i.e.  a point $x\in S$ such that
\[
\rho_T(x,A)=\inf_{y\in S} \rho_T(y,A).
\]
For the finite space $S$, ``\textit{inf}'' is replaced by ``\textit{min}'' in the above definition, that is $x\in S$ is a median of $A$ if it minimizes the total distance function $\rho_T(.,A)$. Furthermore, the median value of $A$, denoted by $\mu(A)$, is the infimum (respectively, minimum) value of the total distance function to $A$. We denote by $\mathcal{M}_{S,\rho}(A)$ the set of all medians of $A\subset S$. In particular, we denote by $d_T^{(n)}(x,A)$ the total breakpoint distance of permutation $x\in S_n$ to $A\subset S_n$, and by $\mathcal{M}_n(A)$ the set of all breakpoint medians of $A$, that is $\mathcal{M}_n(A):=\mathcal{M}_{S_n,d^{(n)}}(A) $. There always exists a median (not necessarily unique) for any subset of a finite metric space, while this is not true for general infinite metric spaces. In the simple case of two points $x$ and $y$ in a general metric space, it is clear from the definition that every median of $x$ and $y$  is a geodesic point of them and vice versa. That is, $\overline {[x,y]}_S$ is the set of medians of $x$ and $y$. 

Medians play an important role in small and large phylogeny problems. In some evolutionary models, at least one of the medians of some species carries valuable information about their first common ancestor or even about the phylogenetic tree. According to some simulation studies, when the symmetric group is endowed with the bp-distance, Haghighi et al.~\cite{haghighi12} conjectured that a major proportion of bp medians of $k$ random permutations lie around these $k$ random permutations (corners). Therefore, it seems hard to find a median far from any of these $k$ random permutations. Jamshidpey et al.~\cite{jam14} investigated this further and found a family of bp median points using the new concept of accessible points. This concept may also help to find a median far from all random permutations. More precisely, let $X$ be a subset of $\hat S_n$. Following \cite{jam14}, we say $z \in \hat S_n$ is $1$-accessible from $X$ if there exists a natural number $m$, a finite sequence $y_1,..., y_m \in X$, and a finite sequence $z_1=y_1, ..., z_m=z \in \hat S_n$ such that $z_{i+1} \in \overline{[z_i,y_{i+1}]}^*$, for $i=1 ... m-1$ (See Fig.~\ref{fig2}). The set of all $1$-accessible points of $X$ is denoted by $Z(X)$. Let $Z_0(X):=X$, and, by induction, for $r \in \N_0$, let $Z_{r+1}(X):=Z(Z_r(X))$. By definition, we have
\[
\bigcup\limits_{x,y \in Z_r(X)}\overline{[x,y]}^* \subset Z_{r+1}(X).
\] 
A permutation class $z$ is accessible from $X$ if there exists a natural number $r$ such that $z \in Z_r(X)$. We denote the set of all accessible points by $\bar Z(X) = \cup_{r \in \N_0} Z_r(X)$.

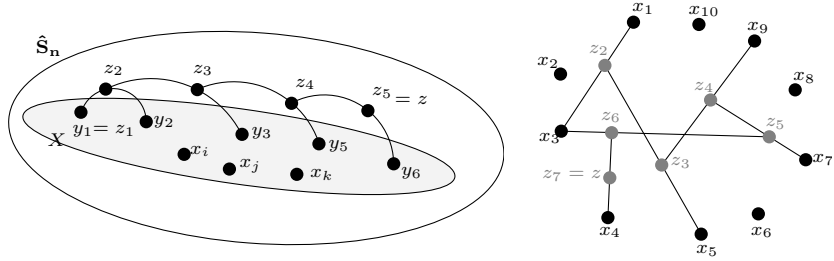
\begin{figure}[ht!]
\begin{scriptsize}
\begin{center}
\begin{tabular}{ll}
\begin{tikzpicture}[line cap=round,line join=round,>=triangle 45,x=0.7cm,y=0.7cm]
\clip(-0.54,-1.8) rectangle (9.08,2.76);
\draw [shift={(1.67,0.78)}] plot[domain=0:2.85,variable=\t]({1*0.63*cos(\t r)+0*0.63*sin(\t r)},{0*0.63*cos(\t r)+1*0.63*sin(\t r)});
\draw [shift={(2.39,-0.33)}] plot[domain=0.47:2.03,variable=\t]({1*1.93*cos(\t r)+0*1.93*sin(\t r)},{0*1.93*cos(\t r)+1*1.93*sin(\t r)});
\draw [shift={(3.96,-0.17)}] plot[domain=0.32:1.98,variable=\t]({1*1.71*cos(\t r)+0*1.71*sin(\t r)},{0*1.71*cos(\t r)+1*1.71*sin(\t r)});
\draw [shift={(5.68,-0.03)}] plot[domain=0:2.06,variable=\t]({1*1.32*cos(\t r)+0*1.32*sin(\t r)},{0*1.32*cos(\t r)+1*1.32*sin(\t r)});
\draw [rotate around={-8.06:(4.06,0.31)},fill=black,fill opacity=0.05] (4.06,0.31) ellipse (2.9cm and 0.5cm);
\draw [rotate around={-4.38:(4.39,0.47)}] (4.39,0.47) ellipse (3.3cm and 1.4cm);
\draw (0.28,0.7) node[anchor=north west] {$X$};
\draw (1.14,0.86) node[anchor=north west] {$=z_1$};
\draw (6.88,1.42) node[anchor=north west] {$=z$};
\draw (0.06,2.56) node[anchor=north west] {$\mathbf{\hat S_n}$};
\fill [color=black] (1.06,0.96) circle (2.5pt);
\draw[color=black] (1.1,0.6) node {$y_1$};
\fill [color=black] (2.3,0.78) circle (2.5pt);
\draw[color=black] (2.64,0.78) node {$y_2$};
\fill [color=black] (4.12,0.54) circle (2.5pt);
\draw[color=black] (4.5,0.56) node {$y_3$};
\fill [color=black] (5.58,0.36) circle (2.5pt);
\draw[color=black] (5.96,0.32) node {$y_5$};
\fill [color=black] (7,-0.02) circle (2.5pt);
\draw[color=black] (7.34,-0.2) node {$y_6$};
\fill [color=black] (1.53,1.4) circle (2.5pt);
\draw[color=black] (1.68,1.76) node {$z_2$};
\fill [color=black] (3.27,1.39) circle (2.5pt);
\draw[color=black] (3.38,1.76) node {$z_3$};
\fill [color=black] (5.06,1.13) circle (2.5pt);
\draw[color=black] (5.32,1.5) node {$z_4$};
\fill [color=black] (3.02,0.16) circle (2.5pt);
\draw[color=black] (3.32,0.22) node {$x_i$};
\fill [color=black] (3.88,-0.12) circle (2.5pt);
\draw[color=black] (4.28,-0.06) node {$x_j$};
\fill [color=black] (5.16,-0.22) circle (2.5pt);
\draw[color=black] (5.64,-0.22) node {$x_k$};
\fill [color=black] (6.51,0.99) circle (2.5pt);
\draw[color=black] (6.78,1.34) node {$z_5$};
\end{tikzpicture}
&
\begin{tikzpicture}[line cap=round,line join=round,>=triangle 45,x=0.5cm,y=0.5cm]
\clip(2.32,-2.88) rectangle (10.2,3.92);
\draw (4.94,3.38)-- (3.02,0.48);
\draw (4.18,2.23)-- (6.74,-2.26);
\draw (5.69,-0.42)-- (8.16,2.88);
\draw (6.99,1.31)-- (9.52,-0.28);
\draw (8.55,0.33)-- (3.02,0.48);
\draw (4.36,0.44)-- (4.26,-1.82);
\fill [color=black] (4.94,3.38) circle (2.5pt);
\draw[color=black] (5.2,3.74) node {$x_1$};
\fill [color=black] (3,2) circle (2.5pt);
\draw[color=black] (2.68,2.3) node {$x_2$};
\fill [color=black] (3.02,0.48) circle (2.5pt);
\draw[color=black] (2.72,0.3) node {$x_3$};
\fill [color=black] (4.26,-1.82) circle (2.5pt);
\draw[color=black] (4.32,-2.2) node {$x_4$};
\fill [color=black] (6.74,-2.26) circle (2.5pt);
\draw[color=black] (6.9,-2.7) node {$x_5$};
\fill [color=black] (8.26,-1.72) circle (2.5pt);
\draw[color=black] (8.36,-2.2) node {$x_6$};
\fill [color=black] (9.52,-0.28) circle (2.5pt);
\draw[color=black] (10,-0.28) node {$x_7$};
\fill [color=black] (9.24,1.58) circle (2.5pt);
\draw[color=black] (9.48,1.9) node {$x_8$};
\fill [color=black] (8.16,2.88) circle (2.5pt);
\draw[color=black] (8.26,3.24) node {$x_9$};
\fill [color=black] (6.68,3.32) circle (2.5pt);
\draw[color=black] (6.82,3.68) node {$x_{10}$};
\fill [color=gray] (4.18,2.23) circle (2.5pt);
\draw[color=gray] (4.02,2.64) node {$z_2$};
\fill [color=gray] (5.69,-0.42) circle (2.5pt);
\draw[color=gray] (6.2,-0.36) node {$z_3$};
\fill [color=gray] (6.99,1.31) circle (2.5pt);
\draw[color=gray] (6.82,1.66) node {$z_4$};
\fill [color=gray] (8.55,0.33) circle (2.5pt);
\draw[color=gray] (8.72,0.68) node {$z_5$};
\fill [color=gray] (4.36,0.44) circle (2.5pt);
\draw[color=gray] (4.34,0.86) node {$z_6$};
\fill [color=gray] (4.31,-0.76) circle (2.5pt);
\draw[color=gray] (3.3,-0.7) node {$z_7=z$};
\end{tikzpicture}
\end{tabular}
\end{center}
\end{scriptsize}
\caption{Accessibility for 10 points: original definition (Figure from~\cite{jam16}).\label{fig2}}
\end{figure}

Here we represent accessible points in a slightly more illustrative way. That is, for a set of given permutation classes $X$, let 
\begin{equation}
\mathcal{Z}(X)=\bigcup\limits_{x,y\in X} \overline{[x,y]}^*.
\end{equation}
Then setting $\mathcal{Z}_0(X):=X$, by induction, we define $\mathcal{Z}_{k+1}(X)=\mathcal{Z}(\mathcal{Z}_k(X))$ for $k\in \N_0$, that is
\[
\mathcal{Z}_{k+1}(X):= \bigcup\limits_{x,y\in \mathcal{Z}_k(X)} \overline{[x,y]}^*.
\]
Finally, the set of all accessible points is defined by
\[
\bar{\mathcal{Z}}(X):=\bigcup\limits_{n\in \N_0} \mathcal{Z}_n(X).
\]
Obviously, these two definitions are not restricted to the case of the symmetric groups and can be considered for a general metric (pseudometric) space $(S,\rho)$. We only need to replace $\hat{S}_n$ and $\overline{[.,.]}^*$ by $S$ and $\overline{[.,.]}_S$, respectively. The latter definition gives a new representation of the former notion of accessibility, and, in fact, we can see that these two definitions are equivalent, in general. We have the following proposition.
\begin{proposition}\label{accessibility equivalence}
Let $S$ be a metric space, then $\bar{\mathcal{Z}}(X)= \bar Z(X)$, for any $X\subset S$.
\end{proposition}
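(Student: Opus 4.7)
The plan is to establish a double inclusion. A preliminary observation that will be used throughout is that the sequence $\mathcal{Z}_k(X)$ is monotone increasing: for every $x\in S$ we have $\rho(x,x)=0=\rho(x,x)+\rho(x,x)$, so $x\in\overline{[x,x]}_S$, and hence $\mathcal{Z}_k(X)\subset \mathcal{Z}(\mathcal{Z}_k(X))=\mathcal{Z}_{k+1}(X)$. (The analogous monotonicity for $Z_k(X)$ holds because the trivial sequence $y_1=z_1=x$ of length $m=1$ witnesses $x\in Z(X)$ whenever $x\in X$.)

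For the inclusion $\bar{\mathcal{Z}}(X)\subset \bar Z(X)$, I would prove by induction on $k$ that $\mathcal{Z}_k(X)\subset Z_k(X)$. The base $k=0$ is just $X=X$. For the step, take any $z\in\mathcal{Z}_{k+1}(X)$, so $z\in\overline{[x,y]}_S$ for some $x,y\in\mathcal{Z}_k(X)\subset Z_k(X)$. Apply the definition of $1$-accessibility to $Z_k(X)$ with $m=2$, $y_1=x$, $y_2=y$, $z_1=x$, $z_2=z$: the single condition $z_2\in\overline{[z_1,y_2]}_S$ is exactly $z\in\overline{[x,y]}_S$. Hence $z\in Z(Z_k(X))=Z_{k+1}(X)$.

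The more delicate direction is $\bar Z(X)\subset \bar{\mathcal{Z}}(X)$. First I would show $Z(X)\subset \bar{\mathcal{Z}}(X)$: given $z\in Z(X)$ witnessed by $y_1,\ldots,y_m\in X$ and $z_1=y_1,\ldots,z_m=z$ with $z_{i+1}\in\overline{[z_i,y_{i+1}]}_S$, prove by induction on $i$ that $z_i\in\mathcal{Z}_{i-1}(X)$. Indeed $z_1=y_1\in X=\mathcal{Z}_0(X)$, and if $z_i\in\mathcal{Z}_{i-1}(X)$ then using $y_{i+1}\in X\subset\mathcal{Z}_{i-1}(X)$ (by monotonicity) gives $z_{i+1}\in\overline{[z_i,y_{i+1}]}_S\subset\mathcal{Z}_i(X)$. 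Thus $z=z_m\in\mathcal{Z}_{m-1}(X)\subset\bar{\mathcal{Z}}(X)$.

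To iterate, I would prove by induction on $k$ that $Z_k(X)\subset\bar{\mathcal{Z}}(X)$. Given $z\in Z_{k+1}(X)=Z(Z_k(X))$, witnessed by $y_1,\ldots,y_m\in Z_k(X)$ and $z_1,\ldots,z_m=z$, the inductive hypothesis places each $y_i$ in some $\mathcal{Z}_{j_i}(X)$; setting $j_0=\max_i j_i$, monotonicity yields $y_1,\ldots,y_m\in\mathcal{Z}_{j_0}(X)$. Repeating the argument of the previous paragraph with $\mathcal{Z}_{j_0}(X)$ in place of $X$ gives $z\in\mathcal{Z}_{j_0+m-1}(X)\subset\bar{\mathcal{Z}}(X)$. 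The main obstacle is precisely this step: one must exploit finiteness of the witnessing sequence to produce a \emph{uniform} index $j_0$ so that all $y_i$ sit in a common $\mathcal{Z}_{j_0}(X)$, which is exactly where monotonicity of $\mathcal{Z}_k(X)$ becomes essential.
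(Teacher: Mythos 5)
Your proof is correct and takes essentially the same route as the paper: the inclusion $\bar{\mathcal{Z}}(X)\subset \bar Z(X)$ via the induction $\mathcal{Z}_k(X)\subset Z_k(X)$ (a geodesic point of two points of $Z_k(X)$ is $1$-accessible from it), and the reverse inclusion by tracking the accessibility chain $z_1,\dots,z_m$ through successive levels $\mathcal{Z}_{i}(X)$. The only difference is cosmetic: where you re-run the chain argument from a uniform level $\mathcal{Z}_{j_0}(X)$ obtained by monotonicity, the paper packages the same iteration as a fixed-point statement ($Z(\bar{\mathcal{Z}}(X))=\bar{\mathcal{Z}}(X)$, because $\bar{\mathcal{Z}}(X)$ is closed under taking geodesic points), both resting on the same observations.
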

To prove the above proposition, we need the following lemmas.
\begin{lemma}\label{lemma accessible 1}
For $X\subset Y\subset S$, we have $\mathcal{Z}(X)\subset Z(Y)$.
\end{lemma}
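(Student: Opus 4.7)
The plan is to unpack the two definitions and note that the conclusion essentially follows by taking the shortest possible sequence in the definition of $1$-accessibility.

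First, I would let $z$ be an arbitrary point of $\mathcal{Z}(X)$. By the definition of $\mathcal{Z}$, there exist $x,y \in X$ with $z \in \overline{[x,y]}_S$. Since $X\subset Y$, both $x$ and $y$ lie in $Y$, so they are eligible as the ``anchor'' points $y_i$ in the definition of $1$-accessibility from $Y$.

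Next, to produce a witnessing sequence for $z \in Z(Y)$, I would take $m=2$ and set $y_1 := x$, $y_2 := y$, together with $z_1 := y_1 = x$ and $z_2 := z$. The only condition to verify is $z_2 \in \overline{[z_1, y_2]}_S$, i.e.\ $z \in \overline{[x,y]}_S$, which is exactly how $z$ was chosen. Hence $z \in Z(Y)$, and since $z$ was arbitrary we conclude $\mathcal{Z}(X)\subset Z(Y)$.

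There is not really a hard step here; the lemma is essentially a bookkeeping statement saying that the ``one-step'' operation $\mathcal{Z}$ is dominated by the ``arbitrarily-many-steps'' operation $Z$ (and is monotone in $Y$). The only pitfall I would be careful about is handling the degenerate case $x=y$, where $\overline{[x,y]}_S=\{x\}$; the same choice $m=2$, $y_1=y_2=x$, $z_2=x\in \overline{[x,x]}_S$ still works, so no separate argument is needed. This lemma will then feed into the proof of Proposition~\ref{accessibility equivalence} by giving the inclusion $\mathcal{Z}_k(X)\subset Z_k(X)$ inductively (via $\mathcal{Z}(\mathcal{Z}_k(X))\subset Z(Z_k(X))$).
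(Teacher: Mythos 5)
Your proof is correct and follows essentially the same route as the paper: the paper first notes $\mathcal{Z}(X)\subset\mathcal{Z}(Y)$ by monotonicity and then observes that any point of $\overline{[x,y]}_S$ with $x,y\in Y$ is $1$-accessible from $Y$, which is exactly the content of your explicit witness sequence $m=2$, $y_1=x$, $y_2=y$, $z_1=x$, $z_2=z$. Your version just makes the witnessing sequence (and the degenerate case $x=y$) explicit, which is a harmless elaboration of the same argument.
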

\begin{proof}
By definition of $\mathcal{Z}(X)$, it is clear that $\mathcal{Z}(X)\subset \mathcal{Z}(Y)$. Also, if $z\in \mathcal{Z}(Y)$, then there exists $x,y\in Y$ such that $z\in \overline{[x,y]}_S$. Therefore, $z$ is a 1-accessible point of $Y$, i.e. $z\in Z(Y)$. 
\end{proof}
\begin{lemma}\label{lemma accessible 2}
Let $X\subset S$ be such that for any $x,y\in X$, $\overline{[x,y]}_S\subset X$, then $\mathcal{Z}(X)=Z(X)=X$. In particular, $\mathcal{Z}(\bar{\mathcal{Z}}(X))=Z(\bar{\mathcal{Z}}(X))=\bar{\mathcal{Z}}(X)$.
\end{lemma}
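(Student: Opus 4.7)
The plan is to unpack the definitions and proceed in three short steps. First I would establish $\mathcal{Z}(X)=X$: the inclusion $\mathcal{Z}(X)\subset X$ is immediate, since by the defining union $\mathcal{Z}(X)=\bigcup_{x,y\in X}\overline{[x,y]}_S$ every piece is contained in $X$ by hypothesis. For the reverse inclusion, I would note that $\overline{[x,x]}_S=\{x\}$ (any geodesic point $z$ of the pair $(x,x)$ must satisfy $2\rho(x,z)=\rho(x,x)=0$), so each $x\in X$ appears in the union and hence lies in $\mathcal{Z}(X)$.

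Second, I would prove $Z(X)=X$. The inclusion $X\subset Z(X)$ uses the trivial witness $m=1$, $y_1=z_1=x$. For $Z(X)\subset X$, given $z\in Z(X)$ with witnesses $y_1,\dots,y_m\in X$ and $z_1=y_1,\dots,z_m=z$ satisfying $z_{i+1}\in\overline{[z_i,y_{i+1}]}_S$, I would argue by induction on $i$ that every $z_i\in X$. The base case is $z_1=y_1\in X$; for the inductive step, $z_i,y_{i+1}\in X$ together with the closure hypothesis yields $\overline{[z_i,y_{i+1}]}_S\subset X$, hence $z_{i+1}\in X$. Taking $i=m$ gives $z\in X$.

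For the ``In particular'' assertion, I would set $Y:=\bar{\mathcal{Z}}(X)$ and verify that $Y$ itself satisfies the closure hypothesis of the lemma, so that applying the first part with $Y$ in place of $X$ yields $\mathcal{Z}(Y)=Z(Y)=Y$. The key observation here is that the sequence $(\mathcal{Z}_k(X))_{k\in\N_0}$ is monotone increasing: $\mathcal{Z}$ is inclusion-monotone (more pairs in the union), and $\mathcal{Z}_0(X)\subset\mathcal{Z}_1(X)$ by the same trivial-geodesic remark used in the first step, so induction yields $\mathcal{Z}_k(X)\subset\mathcal{Z}_{k+1}(X)$ for every $k$. Consequently, any two points $x,y\in \bar{\mathcal{Z}}(X)$ lie in a common $\mathcal{Z}_k(X)$, and then $\overline{[x,y]}_S\subset\mathcal{Z}_{k+1}(X)\subset\bar{\mathcal{Z}}(X)$, as required. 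The proof is essentially a definition chase, and the only mild subtlety is this monotonicity of the levels; once that is noted, the second conclusion reduces to the first without further work.
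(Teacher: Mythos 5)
Your proof is correct and follows essentially the same route as the paper: show $Z(X)\subset X$ by inducting along the accessibility chain using the closure hypothesis, and obtain the ``in particular'' statement by verifying that $\bar{\mathcal{Z}}(X)$ itself satisfies the hypothesis, since any two of its points lie in a common level $\mathcal{Z}_k(X)$ whose successor absorbs their geodesic points. The only differences are cosmetic: you argue $\mathcal{Z}(X)\subset X$ directly from the hypothesis rather than via the preceding lemma, and you make explicit the trivial-geodesic and level-monotonicity facts that the paper uses implicitly.
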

\begin{proof}
From the last lemma, it is sufficient to prove $Z(X)=X$, and this is itself clear by definition, as if $z\in Z(X)$, then there exists $m\in \N$, a finite sequence $y_1,..., y_m \in X$, and a finite sequence $z_1=y_1, ..., z_m=z \in S$ such that $z_{i+1} \in \overline{[z_i,y_{i+1}]}_S$. Hence, by assumption, $z_1,...,z_m=z$ are all in $X$. Now, for any two points $x,y\in \bar{\mathcal{Z}}(X)$, there exists $k\in \N$ such that $x,y\in \mathcal{Z}_k(X)$, and thus, $\overline{[x,y]}_S \in \mathcal{Z}_{k+1}(X)\subset \bar{\mathcal{Z}}(X)$. This completes the proof.
\end{proof}

\begin{proof}[Proof of Proposition \ref{accessibility equivalence}]
By Lemma \ref{lemma accessible 1}, $\mathcal{Z}(X)\subset Z(X)$, and hence by induction and applying the same lemma repeatedly, we have $\mathcal{Z}_k(X)\subset Z_k(X)$, for any $k\in \N$. Therefore, $\bar{\mathcal{Z}}(X)\subset \bar{Z}(X)$. To prove the other side, let $z\in Z(X)=Z_1(X)$. There exist $m\in \N$, $y_1,..., y_{m+1} \in X$, and $z_1=y_1, ..., z_{m+1}=z \in S$ such that $z_{i+1} \in \overline{[z_i,y_{i+1}]}_S$. Thus, for $i=2,...,m+1$, $z_i\in \mathcal{Z}_{i-1}(X)$, and in particular, $z\in \mathcal{Z}_{m}(X)$, and hence $Z_1(X)\subset \bar{\mathcal{Z}}(X)$. Now, by Lemma \ref{lemma accessible 1} and Lemma \ref{lemma accessible 2}, 
\[
Z_2(X)=Z(Z_1(X))\subset Z(\bar{\mathcal{Z}}(X))=\bar{\mathcal{Z}}(X).
\]
Repeating this argument, for any $r\in \N_0$, $Z_r(X)\subset \bar{\mathcal{Z}}(X)$ and therefore, $\bar{Z}(X)\subset \bar{\mathcal{Z}}(X)$.
\end{proof}
We say $m\in \N_0$ is the order of accessibility of a set $X$ in a metric space $S$, if $m$ is the minimum number such that $\mathcal{Z}_m(X)=\mathcal{Z}_{m+r}(X)$, for any $r\in \N_0$. In other words, $m$ is the order of accessibility of $X$, if it is the minimum number such that $\bar{\mathcal{Z}}(X)=\mathcal{Z}_m(X)$. If there is no such number, we say the order of accessibility of $X$ is $\infty$. In the case that $S$ is a finite metric space, as in the case of $\hat{S}_n$ endowed with the bp metric, the order of accessibility of any subset of $S$ is finite. As an example, let $A$ be a bounded closed convex subset of $S=\R^n$ (endowed with the Euclidean topology), and let $\partial A$ be its boundary. Then the order of accessibility of $\partial A$ is $1$, and $\bar{Z}(\partial A)=A$.

It is shown in \cite{jam14} that for $k$ permutations $\{x_1,...,x_k\}$ in $S_n$ with maximum distance $n-1$ between any two of them, a permutation $x$ is a median if and only if $\mathcal{A}_x \subset \mathcal{A}_{x_1,...,x_k}$. The situation is similar for $k$ random permutations, since the expected number of common adjacencies for any two random permutations is very small~\cite{jam14}. On the other hand, for $x,y\in S_n$, a permutation $\pi$ lies on $\overline{[x,y]}$ if and only if
\[
\mathcal{A}_{x,y}\subset \mathcal A_{\pi} \subset \mathcal{A}_x \cup \mathcal{A}_y
\]
(see Lemma 2~\cite{jam14}). This shows that the idea of accessible points can be useful in order to find a median far from corners. For example, in the case of three permutations with maximum distance $n-1$ from each other, we can start from two of them, say $x$ and $y$, and find a permutation $\pi \in \overline{[x,y]}$ that is not very close to $x$ and $y$. This must be done by choosing carefully adjacencies from both $x$ and $y$ (including all common adjacencies of them in the case of three random permutations) such that these adjacencies together construct permutation $\pi$. Then we should try to pick some of the adjacencies of $\pi$ (with a sufficient number of adjacencies from each of $x$ and $y$) and also pick some adjacencies from the third permutation, say $z$, to construct a permutation far from all $x,y,z$ whose set of adjacencies is contained in $\mathcal{A}_x\cup \mathcal{A}_y\cup \mathcal{A}_z$, and therefore it is a median of these three points. 

Common adjacencies of permutations can be regarded as a set of segments. A segment (of $S_n$) is a set of consecutive adjacencies of a permutation of length $n$. More explicitly, a segment of length $k\in [n-1]$ is a set of adjacencies 
\[
\{\{n_0,n_1\},\{n_1,n_2\},...,\{n_{k-2},n_{k-1}\},\{n_{k-1},n_k\}\},
\]
where $n_0,n_1,...,n_k\in [n]$ are different natural numbers. It can also be denoted by $[n_0,n_1,...,n_k]$ or equivalently by $[n_k,...,n_1,n_0]$. In particular, any segment of length $n-1$ is the set of adjacencies of a permutation class and vice versa. By convention, we assume that the empty set $\emptyset$ is a segment. We say a segment $s$ is a subsegment of a segment $s'$ if $s\subset s'$. For a given permutation $\pi=\pi_1 \ ... \ \pi_n\in S_n$, for $i\leq j$, the segment $[\pi_i,\pi_{i+1},...,\pi_j]=[\pi_j,...,\pi_{i+1},\pi_i]$ is denoted by $s_{ij}=s_{ij}^\pi$ and is called a segment of $\pi$. We denote by $|s|$ the length of a segment $s$. For a segment $s:=[n_0,...,n_k]$ , $n_0$ and $n_k$ are called end points, and $n_1,...,n_{k-1}$ are called intrinsic points of the segment. Any point (number) which is not either an end point or an intrinsic point of $s$ is called an isolated point with respect to $s$. We denote by $End(s)$, $Int(s)$, and $Iso(s)$, the set of end points, intrinsic points, and isolated points of $s$, respectively. Note that a segment is originally defined as a set of adjacencies and therefore all set operations can be applied on it. Two segments $s=[n_0,...,n_k]$ and $s'=[m_0,...,m_{k'}]$ are said to be strongly disjoint if $\{n_0,...,n_k\}\cap \{m_0,...,m_{k'}\}=\emptyset$. They are disjoint if $s\cap s'=\emptyset$, otherwise we say that they intersect. 

Also, by a set of segments (segment set) of $S_n$, we mean the union of some pairwise strongly disjoint segments of $S_n$. In other words, a set of segments or a segment set $I$ is a subset of $\mathcal{A}_\pi$ for a permutation class $\pi$. In this case, we say $I$ is a segment set of $\pi$ or $\pi$ contains $I$. It is clear that a segment set can be contained in more than one permutation, or in other words, it can be contained in the intersection of adjacencies of several permutations. By a segment (or component) of a segment set $I$ we mean a maximal segment contained in $I$, and to show a segment $s$ is a segment of $I$, we denote $s\hat{\in} I$. Although a segment set $I$ containing segments $s_1,...,s_k$ is in principle the union of adjacencies of $s_i$'s, that is $I=\cup_{i=1}^k s_i$, to ease the notation, we sometime denote it by $\{s_1,...,s_k\}$. Also we denote by $\|I\|:=k$, the number of segments of $I$. Note that the notation $|\ .\ |$ is used for both cardinality of a set and absolute value of a real number. For example, as we already indicated for a segment $s$, $|s|$ is the number of adjacencies of $s$, and,  by the original definition of a segment set as a union of segments, 
\[
|I|=\sum\limits_{i=1}^{\|I\|} |s_i|
\]
is the number of adjacencies of $I$. Also, we frequently use, $|\ .\ |$ for sets such as $End(s)$, $Int(s)$, and $Iso(s)$, to indicate their cardinality. 

Denote by $\mathcal{I}_{m,k}^{(n)}$ the set of all segment sets of $S_n$ with $m$ adjacencies and $k$ segments, i.e. $\mathcal{I}_{m,k}^{(n)}$ is the set of all segment sets $I$ with $|I|=m$ and $\|I\|=k$. Similarly, let $\mathcal{I}_m^{(n)}$ be the set of all segment sets of $S_n$ with $m$ adjacencies. Finally, denote by $\mathcal{I}^{(n)}$, the set of all segment sets of $S_n$. Note that $\mathcal{I}_{m,k}^{(n)}$ may be empty for some $m,k,n$. To have $\mathcal{I}_{m,k}^{(n)}$ non-empty, it is necessary to have
\[
k\leq m\leq n-k,
\]
where the last inequality holds since for a segment set $I\in \mathcal{I}_{m,k}^{(n)}$ and for any arbitrary permutation $\pi$ containing $I$, there should be at least $k-1$ adjacencies of $\pi$ that are not used in $I$ in order to separate $k$ segments of $I$, and therefore $m$ should be bounded by $(n-1)-(k-1)=n-k$. 

It is clear that the intersection of segments (and in general, the intersection of segment sets) is  always a segment set. Two segment sets $I$ and $J$ (in particular two segments $s$ and $s'$, respectively) are said to be consistent, if their union is contained in $\mathcal{A}_\pi$, for a permutation class $\pi$. In particular, any two segment sets of a permutation $\pi$ are consistent. For example, for $n=10$, two segments $[3,7,10,2,5]$ and $[2,5,8,1]$ are consistent and their union is the segment $[3,7,10,2,5,8,1]$, while two segments $[2,6,3,8,1]$ and $[8,1,4,7,6,3,5]$ are not consistent. When we speak of the union of two or more segment sets (respectively, two or more segments) we always assume that they are pairwise consistent. 
We say segment sets $I_1,...,I_k$ complete each other if there exists a permutation $\pi$ such that $\cup_{i=1}^k I_i=\mathcal{A}_\pi$. The complement of a segment set $I$ contained in a permutation $\pi$, is $\bar{I}_\pi:=\mathcal{A}_\pi\setminus I$. In other words, for a segment set $I=\{s_{i_1j_1}^\pi,s_{i_2j_2}^\pi, ...,s_{i_kj_k}^\pi\}$ contained in $\pi$, $\overline{I}_{\pi}=\{s_{j_0i_1}^\pi,s_{j_1i_2}^\pi, ...,s_{j_ki_{k+1}}^\pi\}$, with $j_0=1,i_{k+1}=n$. For $r=1,...,k+1$, we denote by $\overline{I}_{\pi}^{(r)}$ the $r$-th segment of $\overline{I}_{\pi}$ on $\pi$ from left, that is $\overline{I}_{\pi}^{(r)}=s_{j_{r-1}i_r}^\pi$. When we write $\bar{I}_\pi$, we assume that $I$ is contained in $\pi$. 
We can extend the notions of \emph{end point}, \emph{intrinsic point}, and \emph{isolated point} to the case of segment sets as follows. A number $u\in \{1,...,n\}$ is an end point (respectively, an intrinsic point) of a non-empty segment set $I=\{s_1,...,s_l\}$ if it is an end point (respectively, an intrinsic point) of exactly one of the segments of $I$. It is an isolated point of $I$ if it is neither an end point nor an intrinsic point of $I$, or equivalently if it is an isolated point of all of segments of $I$. In other words, using the same notations of $End(I)$, $Int(I)$ and $Iso(I)$ for these three types of points for the segment set $I$, we have
\[
\begin{array}{c}
End(I)=\bigcup\limits_{s\hat{\in} I} End(s)\\
Int(I)=\bigcup\limits_{s\hat{\in} I} Int(s)\\
Iso(I)=\bigcap\limits_{s\hat{\in} I} Iso(s)
\end{array} 
\]
When $I$ is the empty segment set, we define $End(I)=Int(I)=\emptyset$ and $Iso(I)=[n]$. For example, when $n=10$, $I=\{[2,3,9,4],[5,6]\}$ is a segment set having $3,9$ as its intrinsic points, $2,4,5,6$ as its end points, and $1,7,8,10$ as its isolated points. 
We say two segments $s,s'\hat{\in} I$ are neighbours with respect to $\pi$, if there exist $i<j$ such that $\pi_i,\pi_j \in End(s)\cup End(s')$ and for any $k$ with $i<k<j$ (if there is any), $\pi_k\in Iso(I)$. We say a segment $s$ connects two disjoint segments $s_1$ and $s_2$, if $s_1\cup s\cup s_2$ is a segment. 


Given a segment set $I$ of $id$, our goal is to count the number of all permutations $x\in S_n$ such that there exists a permutation $\pi \in \overline{[id,x]}\setminus \{id,x\} \neq \emptyset$ containing $I$ such that $\mathcal{A}_\pi\setminus I$ is a segment set in $x$. In order to find all permutations $x$ with this property, it is convenient to classify the adjacencies of any permutation $x\in S_n$ with respect to $I$. This classification should show all possible ways that every adjacency of $x$ may be used to construct such a permutation $\pi$.  We say the adjacency $\{x_i,x_{i+1}\}$ is \emph{2-free-end}, with respect to $I$, if $x_i$ and $x_{i+1}$ are both isolated points of $I$. It is called \emph{1-free-end}, w.r.t. $I$, if either $x_i$ or $x_{i+1}$ is an isolated point of $I$, and the other is an end point of $I$. It is a \emph{trivial segment}, w.r.t. $I$, if $x_i$ and $x_{i+1}$ are both end points of $I$. Finally, $\{x_1,x_{i+1}\}$ is \emph{0-free-end}, w.r.t. $I$, if either $x_i$ or $x_{i+1}$ is an intrinsic point of $I$. In order to construct a permutation $\pi$ containing $I$ such that $\pi\in\overline{[id,x]}$, $\bar{I}_\pi$ should be contained in $x$. We see that this is an important observation to count the number of permutations $x$ having a permutation $\pi\in \overline{[id,x]}$ far from $id$ and $x$.

\section{Analysis of the adjacency types}
Let $I$ be a segment set of $S_n$. We define $X_n(I)$ to be the set of all permutations $x$ containing a segment set $J$ such that $I\cap J=\emptyset$ and $I\cup J=\mathcal{A}_\pi$ for a permutation $\pi$. Equivalently, letting
\[
\mathbf{C}_n(I)=\{J\in \mathcal{I}^{(n)}: \exists \pi\in S_n \ s.t. \ I\cap J=\emptyset, \ I\cup J=\mathcal{A}_\pi\},
\]
and
\[
\mathcal{R}_n(J)=\{\pi\in S_n: \ J\subset \mathcal{A}_\pi\}, \ for J\in \mathcal{I}^{(n)},
\]
we have
\[
X_n(I)=\bigcup\limits_{J\in \mathbf{C}_n(I)} \mathcal{R}_n(J).
\]
 Note that when $I$ is a segment set of $id$, this definition does not guarantee that $\pi \in \overline{[id,x]}$, and in order to have this property, $\pi$, in addition, must include all adjacencies of $\mathcal{A}_{id,x}$. This motivates us to denote by $\bar{X}_n(I)$ the set of all permutations $x$ for which there exists a permutation $\pi\in \overline{[id,x]}$ whose set of adjacencies can be decomposed into disjoint segment sets $I$ and $J$, i.e. $I\cup J=\mathcal{A}_\pi$, such that $J$ is contained in $x$. In fact, $J$ serves as $\bar{I}_\pi$, the complement of $I$ w.r.t. $\pi$, and by definition $\bar{X}_n(I)\subset X_n(I)$. Therefore, counting the number of elements in $X_n(I)$ gives an upper bound for $|\bar{X}_n(I)|$.
 
To be able to have such a permutation $\pi$, $x$ should contain a segment set $J$ as described above. Then our strategy to count the number of permutations in $X_n(I)$ is, firstly, to find every possible segment set $J$ that can be the complement of $I$ w.r.t. a permutation $\pi$ such that $\mathcal{A}_\pi=I\cup J$, and secondly for each such segment set $J$, to count the number of all possible permutations $x$ containing $J$. Note that for two different segment sets $J$ and $J'$ with the above property, the set of permutations containing $J$ and the set of permutations containing $J'$ do not intersect, and thus considering all possibilities of the segment $J$ gives us a partitioning of $X_n(I)$. An easy observation is that $|\|I\|-\|J\||\leq 1$, and hence there are three possibilities for the number of segments in $J$. On the other hand, all isolated points of $I$, except at most two of them, are intrinsic points of $J$ and vice versa. So this makes it clear how to construct $J$. Basically, depending on the value of $\|I\|-\|J\|$, we take at most two isolated points of $I$ and consider them as end points of $J$. The other end points of $J$ are chosen from the set of end points of $I$, in an appropriate way. Also, the rest of the isolated points of $I$ will be used as intrinsic points of $J$. Once $J$ is determined, it is easy to see that the number of ways one can complete $J$ in order to construct $x$ depends on $\|I\|-\|J\|$ and not $J$ itself. This makes it easy to compute the cardinality of $X_n(I)$, as is indicated in Section \ref{section-main-1}. When it is clear, we drop ``$n$'' from the subscript of $X$.

Motivated by above explanations, to construct a permutation $\pi$ in $\overline{[id,x]}$ containing $I$ such that $\bar{I}_\pi$ is contained in $x$, we cannot use any 0-free-end adjacency of $x$ w.r.t. $I$, since both numbers in the extremities of this type of adjacency are already used in $I$ as its intrinsic points. Therefore, to be able to construct $\pi$ with this property, we must take 2-free-end adjacencies of $x$ w.r.t $I$, to choose the segment set $J$ contained in $x$ as mentioned above. Both the other two types of adjacencies, i.e., 1-free-end and trivial segment adjacencies, can be used only as extremities of segments of $J$. More precisely, a 1-free-end adjacency of $x$ w.r.t $I$ may be used in extremities of segments of any size in $J$, while a trivial segment adjacency of $x$ w.r.t. $I$ may be used only as a segment of length $1$ in $J$. In this section, we compute the expected number (Theorem~\ref{expectation}) and variance (Theorem~\ref{var}) of all four types of adjacencies of a random permutation w.r.t. a random segment contained in $id$, and establish a convergence (in probability) theorem for them. Following this, we study the possibility of constructing a permutation $\pi$ in $\overline{[id,x]}$ containing segment set $I$ from identity such that $\overline{I}_\pi$ is contained in $x$.

The following proposition will be used to prove some of our main results.

\begin{proposition}\label{segmentset}
Given a permutation $x\in S_n$, there exist \[\left(\!\!\!\!\begin{array}{c}
m-1 \\ 
k-1
\end{array}\!\!\!\!\right)\cdot \left(\!\!\!\!\begin{array}{c}
n-m\\ 
k
\end{array}\!\!\!\!\right)\]
segment sets of $x$ with $k>0$ non-empty segments and $m\leq n-1$ adjacencies.
\end{proposition}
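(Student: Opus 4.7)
The plan is to view a segment set of $x = x_1 x_2 \ldots x_n$ as a subset of the $n-1$ adjacency slots of $x$, and then reduce the count to a standard stars-and-bars computation.

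First I would observe that any segment set $I \subset \mathcal{A}_x$ is entirely determined by which of the $n-1$ adjacencies $\{x_i, x_{i+1}\}$ it contains: since $I$ is a union of pairwise strongly disjoint segments, and since any two adjacencies $\{x_i,x_{i+1}\}$, $\{x_{i+1},x_{i+2}\}$ of $x$ that share a position in the word must necessarily belong to the same segment of $I$, the segments of $I$ correspond exactly to the maximal runs of consecutive indices $i \in \{1,\ldots,n-1\}$ such that $\{x_i, x_{i+1}\} \in I$. Thus the problem becomes: count the subsets $S \subset \{1,\ldots,n-1\}$ with $|S|=m$ whose partition into maximal blocks of consecutive integers has exactly $k$ blocks.

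Next I would factor the count as (number of ways to choose the sizes of the $k$ blocks) times (number of ways to place them). For the sizes, I need a composition of $m$ into $k$ positive parts $a_1+\cdots+a_k = m$, of which there are $\binom{m-1}{k-1}$. For the placement, letting $g_0$ be the number of non-chosen slots before the first block, $g_1,\ldots,g_{k-1}$ between consecutive blocks, and $g_k$ after the last block, I need
\[
g_0 + g_1 + \cdots + g_k = (n-1) - m,
\]
with $g_0, g_k \geq 0$ and each $g_i \geq 1$ for $1 \le i \le k-1$ (the latter ensures the $k$ blocks stay maximal and distinct). Substituting $g_i' = g_i - 1$ for the interior gaps, I get $k+1$ non-negative variables summing to $n-1-m-(k-1) = n-m-k$, yielding $\binom{n-m}{k}$ solutions by stars and bars.

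Multiplying the two independent counts gives $\binom{m-1}{k-1}\binom{n-m}{k}$. The only step that needs a little care is justifying the first reduction, namely that segment sets of $x$ are in bijection with subsets of $\mathcal{A}_x$ (i.e.\ that strong disjointness of the components is automatic when all adjacencies come from the same permutation $x$); but this is immediate because the numbers appearing as endpoints or intrinsic points of a maximal run of adjacencies of $x$ are precisely the entries $x_i, x_{i+1}, \ldots, x_j$ in a contiguous window, and distinct maximal runs correspond to disjoint windows of indices, hence share no entries. No nontrivial obstacle arises beyond this bookkeeping.
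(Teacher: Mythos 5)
Your proof is correct and follows essentially the same route as the paper: the paper counts compositions of $m$ into $k$ positive segment lengths of $I$ together with compositions of $n-1-m$ into the $k+1$ lengths of the complementary segments of $\overline{I}_x$ (the two outer ones allowed to be empty), which is exactly your blocks-and-gaps stars-and-bars computation, yielding $\binom{m-1}{k-1}\binom{n-m}{k}$. Your extra remark justifying that segment sets of $x$ correspond to subsets of the $n-1$ adjacency slots is a harmless elaboration of what the paper leaves implicit.
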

\begin{proof}
Consider a segment set $I=\{s_1,...,s_k\}$, with $k$ non-empty segments and $m$ adjacencies that is contained in $x\in S_n$. Then $|\|\overline{I}_{x}\|-k|\leq 1$, and therefore we represent the segments of $\overline{I}_{x}$ by $s'_1,...,s'_{k+1}$, where $s'_j$ is non-empty for $2\leq j\leq k$, and $s'_1$ and $s'_{k+1}$ may be empty. Note that $\sum\limits_{i=1}^{k} |s_i|=m$ and $\sum\limits_{j=1}^{k+1} |s'_j|=n-1-m$ with $|s_i|\geq 1$ for $1\leq i\leq k$ and $|s'_j|\geq 1$ for $2\leq j\leq k$. Hence, the number of solutions for these two equations is equal to:

 \[\left(\!\begin{array}{c}
m-k+(k-1) \\ 
k-1
\end{array}\!\right)\cdot \left(\!\!\!\!\begin{array}{c}
n-1-m-(k-1)+(k+1-1)\\ 
(k+1-1)
\end{array}\!\!\!\!\right)=\left(\!\!\!\!\begin{array}{c}
m-1 \\ 
k-1
\end{array}\!\!\!\!\right)\cdot \left(\!\!\!\!\begin{array}{c}
n-m\\ 
k
\end{array}\!\!\!\!\right)\]

In other words, that is the number of ways we can choose $k$ segments with $m$ adjacencies of $x$. 
\end{proof}

We assume that all random elements and variables are defined on a probability space $(\Omega, \p, \mathcal{F})$, and denote by $\e[\ .\ ]$ and $Var(\ .\ )$, the expected value and variance of a random variable, respectively. We denote by $\xi^{(n)}$, a permutation chosen uniformly at random from $S_n$, and by $I_m^{(n)}$ a segment set chosen uniformly at random from 
$\mathcal{I}_m^{(n)}$. Similarly, let us denote by $I_{m,k}^{(n)}$ a segment set 
chosen uniformly at random from 
$\mathcal{I}_{m,k}^{(n)}$, and let $A_{m,k}^{(n)}$ be the event that $I_m^{(n)}$ has $k$ segments, that is $A_{m,k}^{(n)}:=\{I_m^{(n)}\in \mathcal{I}_{m,k}^{(n)}\}$. We also assume that $\xi^{(n)}$, $I_m^{(n)}$, and $I_{m,k}^{(n)}$ are independent. Let $\alpha ,\beta , \gamma , \delta$ be functions
\[
\alpha ,\beta , \gamma , \delta: \bigcup\limits_{n\in \N} (S_n\times \mathcal{I}^{(n)}) \rightarrow \N_0,
\]
such that, for $x\in S_n$ and a segment set of $S_n$, namely $I$, let $\alpha(x,I)$, $\beta(x,I)$, $\gamma(x,I)$ and $\delta(x,I)$ be the number of 2-free-end adjacencies, 1-free-end adjacencies, trivial segments, and 0-free-end adjacencies of $x$ w.r.t. $I$, respectively. In particular, let $\alpha_m^{(n)}:=\alpha(\xi^{(n)},I_m^{(n)})$, $\beta_m^{(n)}:=\beta(\xi^{(n)},I_m^{(n)})$, $\gamma_m^{(n)}:=\gamma(\xi^{(n)},I_m^{(n)})$ and $\delta_m^{(n)}:=\delta(\xi^{(n)},I_m^{(n)})$. Similarly, let $\alpha_{m,k}^{(n)}:=\alpha(\xi^{(n)},I_{m,k}^{(n)})$, $\beta_{m,k}^{(n)}:=\beta(\xi^{(n)},I_{m,k}^{(n)})$, $\gamma_{m,k}^{(n)}:=\gamma(\xi^{(n)},I_{m,k}^{(n)})$ and $\delta_{m,k}^{(n)}:=\delta(\xi^{(n)},I_{m,k}^{(n)})$. When there is no risk of confusion, we drop ``$n$'' from the superscripts.
\begin{theorem}\label{expectation}
Let $m=m(n)$ and $k=k(n)$ be such that  $0< k\leq m<n$, and let $I$ be an arbitrary segment set in $\mathcal{I}_{m,k}$. Then
\begin{equation}\label{conditional expected number}
\begin{array}{l}
{\displaystyle \e[\alpha_m |A_{m,k}]=\e[\alpha_{m,k}]=\e[\alpha(\xi,I)]=\frac{(n-m-k)(n-m-k-1)}{n},}\\\\
{\displaystyle \e[\beta_m |A_{m,k}]=\e[\beta_{m,k}]=\e[\beta(\xi,I)]=\frac{4k(n-m-k)}{n},}\\\\
{\displaystyle \e[\gamma_m |A_{m,k}]=\e[\gamma_{m,k}]=\e[\gamma(\xi,I)]=\frac{2k(2k-1)}{n}},\\\\
{\displaystyle \e[\delta_m |A_{m,k}]=\e[\delta_{m,k}]=\e[\delta(\xi,I)]=\frac{(m-k)(2n-m+k-1)}{n}.}
\end{array}
\end{equation}
Furthermore,
\begin{equation}\label{expected number}
\begin{array}{l}
{\displaystyle \e[\alpha_m]=\frac{(n- m) (n-m-1)^2 (n-m-2)}{n(n-1)(n-2)},}\\\\
{\displaystyle \e[\beta_m]=\frac{4 m (n-m) (n-m-1)^2}{n(n-1)(n-2)},}\\\\
{\displaystyle \e[\gamma_m]=\frac{2 m (n-m) (2 m (n-m) + n)}{n(n-1)(n-2)}},\\\\
{\displaystyle \e[\delta_m]=\frac{m(m-1)(2 n^2- 6 n- m^2+ 3 m  +2)}{n (n-1)(n-2)}.}
\end{array}
\end{equation}


\end{theorem}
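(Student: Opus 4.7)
The proof splits naturally into the conditional identities \eqref{conditional expected number} and the unconditional identities \eqref{expected number}, the latter obtained from the former by averaging over the number of segments. For the conditional step, the key observation is that for any $I\in\mathcal I_{m,k}^{(n)}$ the triple of sizes $(|Iso(I)|,|End(I)|,|Int(I)|)=(n-m-k,\,2k,\,m-k)$ depends only on $(n,m,k)$, so $\e[\alpha(\xi,I)]$, $\e[\beta(\xi,I)]$, $\e[\gamma(\xi,I)]$, $\e[\delta(\xi,I)]$ are each independent of the particular choice of $I\in\mathcal I_{m,k}^{(n)}$. This automatically gives the two outer equalities on each line of \eqref{conditional expected number}, so only the innermost expressions need a proof.

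To evaluate them, I would fix an arbitrary $I\in\mathcal I_{m,k}^{(n)}$ and write each counting statistic as a sum of $n-1$ position indicators, then apply linearity of expectation. For example, $\alpha(\xi,I)=\sum_{i=1}^{n-1}\mathbbm{1}\{\xi_i,\xi_{i+1}\in Iso(I)\}$; since $(\xi_i,\xi_{i+1})$ is uniform on ordered pairs of distinct elements of $[n]$, the probability that both entries fall in a fixed $v$-element subset equals $v(v-1)/(n(n-1))$, and multiplying by $n-1$ yields $(n-m-k)(n-m-k-1)/n$. The other three conditional expectations follow by the same recipe, with a factor of $2$ in the $\beta$-case accounting for the two orderings of the (isolated, end)-pair, and a short inclusion-exclusion handling $\delta$. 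The global identity $\alpha+\beta+\gamma+\delta\equiv n-1$ serves as a convenient sanity check.

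For the unconditional step, I would condition on $\|I_m^{(n)}\|=k$: the tower property gives $\e[\alpha_m^{(n)}]=\sum_k \p(\|I_m^{(n)}\|=k)\,\e[\alpha_{m,k}^{(n)}]$, and Proposition~\ref{segmentset} supplies the mass function $\p(\|I_m^{(n)}\|=k)=\binom{m-1}{k-1}\binom{n-m}{k}/\binom{n-1}{m}$. Substituting \eqref{conditional expected number} reduces each formula in \eqref{expected number} to a single Vandermonde convolution after a small binomial re-indexing: for $\alpha$ use $(n-m-k)(n-m-k-1)\binom{n-m}{k}=(n-m)(n-m-1)\binom{n-m-2}{k}$ followed by $\sum_k\binom{m-1}{k-1}\binom{n-m-2}{k}=\binom{n-3}{m}$; for $\beta$ use $k(n-m-k)\binom{n-m}{k}=(n-m)(n-m-1)\binom{n-m-2}{k-1}$; for $\gamma$ split $2k(2k-1)=4k(k-1)+2k$ and handle the quadratic piece via $k(k-1)\binom{n-m}{k}=(n-m)(n-m-1)\binom{n-m-2}{k-2}$. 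Each final ratio of the form $\binom{n-3}{\cdot}/\binom{n-1}{m}$ collapses to a product of linear factors of the desired form. The main obstacle is algebraic rather than conceptual: the $\gamma_m^{(n)}$ sum needs two different Vandermonde shifts, and $\delta_m^{(n)}$ is cubic in $k$, which motivates bypassing its direct computation via $\e[\alpha_m^{(n)}]+\e[\beta_m^{(n)}]+\e[\gamma_m^{(n)}]+\e[\delta_m^{(n)}]=n-1$, simultaneously serving as a check on the other three formulas.
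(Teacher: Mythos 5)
Your plan coincides with the paper's proof: the conditional expectations are obtained exactly as you describe (adjacency-position indicators, the probability that a uniform adjacent pair $(\xi_i,\xi_{i+1})$ falls in the relevant subsets of sizes $n-m-k$, $2k$, $m-k$, summed over the $n-1$ positions), and the unconditional ones by averaging over $k$ with $\p(A_{m,k})$ from Proposition~\ref{segmentset}; the only difference is that the paper simply states the summed closed forms, while you supply the Vandermonde reductions and propose getting $\delta_m$ from the identity $\alpha+\beta+\gamma+\delta\equiv n-1$. One substantive remark: if you carry out your $\gamma$ computation, the two Vandermonde shifts give
\[
\e[\gamma_m]=\frac{4m(m-1)(n-m)(n-m-1)}{n(n-1)(n-2)}+\frac{2m(n-m)}{n(n-1)}=\frac{2m(n-m)\bigl(2m(n-m)-n\bigr)}{n(n-1)(n-2)},
\]
i.e.\ with $-n$ where \eqref{expected number} prints $+n$; your own sanity check $\e[\alpha_m]+\e[\beta_m]+\e[\gamma_m]+\e[\delta_m]=n-1$ (test $n=3$, $m=1$: $0+4/3+\e[\gamma_1]+0=2$ forces $\e[\gamma_1]=2/3$, not $14/3$) confirms the $-n$ version, so the printed formula appears to contain a sign slip rather than your route being at fault, and the leading-order behaviour $4c^2(1-c)^2n$ used later is unaffected.
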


\begin{proof}
For $i=1,...,n-1$, let $\hat\alpha_{m,i}$, $\hat\beta_{m,i}$, $\hat\gamma_{m,i}$ and $\hat\delta_{m,i}$ be random variables such that $\hat\alpha_{m,i}=1$ if the $i$-th adjacency of $\xi$, i.e. $\{\xi_i,\xi_{i+1}\}$, is 2-free-end w.r.t. $I_m$ and $\hat\alpha_{m,i}=0$ otherwise; $\hat\beta_{m,i}=1$ if the $i$-th adjacency of $\xi$ is 1-free-end w.r.t. $I_m$ and $\hat\beta_{m,i}=0$ otherwise; $\hat\gamma_{m,i}=1$ if the $i$-th adjacency of $\xi$ is a trivial segment and $\hat\gamma_{m,i}=0$ otherwise; and $\hat\delta_{m,i}=1$ if the $i$-th adjacency of $\xi$ is 0-free-end and $\hat\delta_{m,i}=0$ otherwise. Then, for every $i=1,...,n-1$, we have:
\begin{equation*}
\begin{array}{l}
{\displaystyle \p(\hat\alpha_{m,i}=1|A_{m,k})=\p(\hat \alpha_{m,i}=1|I_m=I)= \frac{(n-m-k)(n-m-k-1)}{n(n-1)},}\\\\
{\displaystyle \p(\hat \beta_{m,i}=1|A_{m,k})=\p(\hat \beta_{m,i}=1|I_m=I)=\frac{4k(n-m-k)}{n(n-1)},}\\\\
{\displaystyle \p(\hat \gamma_{m,i}=1|A_{m,k})=\p(\hat \gamma_{m,i}=1|I_m=I)=\frac{2k(2k-1)}{n(n-1)},}\\\\
{\displaystyle \p(\hat \delta_{m,i}=1|A_{m,k})=\p(\hat \delta_{m,i}=1|I_m=I)=\frac{(m-k)(2n-m+k-1)}{n(n-1)}.}
\end{array}
\end{equation*}
Therefore,
\begin{multline*}
\e[\alpha_m |A_{m,k}]=\e[\alpha_{m,k}]=\e[\alpha(\xi,I)]=\\
\sum\limits_{i=1}^{n-1}\p(\hat\alpha_{m,i}=1 |A_{m,k})=\frac{(n-m-k)(n-m-k-1)}{n}.
\end{multline*}
The other conditional expected values of (\ref{conditional expected number}) are proved similarly.

From Proposition~\ref{segmentset}, the probability that $A_{m,k}$ occurs is 

\[ \p(A_{m,k})=\frac{\left(\begin{array}{c}
m-1 \\ 
k-1
\end{array}\right) \left(\begin{array}{c}
n-m\\ 
k
\end{array}\right)}{\left(\begin{array}{c}
n-1 \\ 
m
\end{array}\right)}.\]

Therefore, by averaging over $k$, we have

\begin{flalign*}
\e[\alpha_m]&=\sum\limits_{k=1}^{m}\frac{(n-m-k)(n-m-k-1)}{n}\frac{\left(\begin{array}{c}
m-1 \\ 
k-1
\end{array}\right) \left(\begin{array}{c}
n-m\\ 
k
\end{array}\right)}{\left(\begin{array}{c}
n-1 \\ 
m
\end{array}\right)}&\\
&=\frac{(n- m) (1 + m - n)^2 (n - m-2)}{(n-2) (n-1) n},&
\end{flalign*}

\begin{flalign*}
 \e[\beta_m]&=\sum\limits_{k=1}^{m}\frac{4k(n-m-k)}{n}\frac{\left(\begin{array}{c}
m-1 \\ 
k-1
\end{array}\right) \left(\begin{array}{c}
n-m\\ 
k
\end{array}\right)}{\left(\begin{array}{c}
n-1 \\ 
m
\end{array}\right)}
=\frac{4 m (n-m) (1 + m - n)^2}{(n-2) (n-1) n},&
\end{flalign*}

\begin{flalign*}
\e[\gamma_m]&=\sum\limits_{k=1}^{m}\frac{2k(2k-1)}{n}\frac{\left(\begin{array}{c}
m-1 \\ 
k-1
\end{array}\right) \left(\begin{array}{c}
n-m\\ 
k
\end{array}\right)}{\left(\begin{array}{c}
n-1 \\ 
m
\end{array}\right)}=\frac{2 m (n-m) (2 m (n-m) + n)}{(n-2) (n-1) n}, &
\end{flalign*}
and
\begin{flalign*}
 \e[\delta_m]&=\sum\limits_{k=1}^{m}\frac{(m-k)(2n-m+k-1)}{n}\frac{\left(\begin{array}{c}
m-1 \\ 
k-1
\end{array}\right) \left(\begin{array}{c}
n-m\\ 
k
\end{array}\right)}{\left(\begin{array}{c}
n-1 \\ 
m
\end{array}\right)}&\\
&=\frac{m(m-1)(2 n^2- 6 n- m^2+ 3 m  +2)}{n (2 - 3 n + n^2)}.&
\end{flalign*}

\end{proof}


\begin{theorem}\label{var}
Let $m=m(n)$ and $k=k(n)$ be such that $0< k\leq m<n$, and let $I$ be an arbitrary segment set in $\mathcal{I}_{m,k}$. Then
\begin{footnotesize}
\begin{flalign*}
Var(\alpha_{m,k})=Var(\alpha(\xi,I))&=\e[\alpha_{m,k}](1-\e[\alpha_{m,k}])+\frac{(n-m-k)(n-m-k-1)^2(n-m-k-2)}{n(n-1)}\\
&=(1-\frac{m+k}{n})^2(\frac{m+k}{n})^2n+o(n),
\end{flalign*}
\begin{flalign*}
Var(\beta_{m,k})=&Var(\beta(\xi,I))=\e[\beta_{m,k}](1-\e[\beta_{m,k}])+\frac{4k(n-m-k)((n-m-k-1)(4k-1)+2k-1)}{n(n-1)}&
\end{flalign*}
\begin{flalign*}
&=4\frac{k}{n}(1-\frac{m+k}{n})(\frac{k}{n}(3-\frac{4k}{n})+\frac{m}{n}(1-\frac{4k}{n}))n+o(n),
\end{flalign*}

\begin{flalign*}
Var(\gamma_{m,k})=&Var(\gamma(\xi,I))=\e[\gamma_{m,k}](1-\e[\gamma_{m,k}])+\frac{2k(2k-1)^2(2k-2)}{n(n-1)}=4(1-\frac{2k}{n})^2(\frac{k}{n})^2n+o(n),&
\end{flalign*}

\begin{flalign*}
Var(\delta_{m,k})=&Var(\delta(\xi,I))=\e[\delta_{m,k}](1-\e[\delta_{m,k}])&
\end{flalign*}

\begin{flalign*}
&+\frac{(m-k)[(m-k-1)(2n-m+k-2)(2n-m+k-3)+2(n-2)(n-1)]}{2n(n-1)}\\
&=(\frac{m-k}{n})^2(1-\frac{m-k}{n})^2n+o(n).
\end{flalign*}
Furthermore,


\begin{multline*}
Var(\alpha_{m})=\e[\alpha_{m}](1-\e[\alpha_{m}])+\sum\limits_{k=1}^{m}\frac{(n-m-k)(n-m-k-1)^2(n-m-k-2)}{n(n-1)}\p(A_{m,k})=\\ (1-\frac{m}{n})^4(\frac{m}{n})^2 (8+\frac{m}{n}(-12+\frac{5m}{n}))n+o(n),
\end{multline*}

\begin{multline*}
Var(\beta_{m})=\e[\beta_{m}](1-\e[\beta_{m}])+\sum\limits_{k=1}^{m}\frac{4k(n-m-k)((n-m-k-1)(4k-1)+2k-1)}{n(n-1)}\p(A_{m,k})=\\
4(1-\frac{m}{n})^3(\frac{m}{n})^2(8-\frac{m}{n}(31+\frac{4m}{n}(-11+\frac{5m}{n})))n+o(n),
\end{multline*}

\begin{multline*}
Var(\gamma_{m})=\e[\gamma_{m}](1-\e[\gamma_{m}])+\sum\limits_{k=1}^{m}\frac{2k(2k-1)^2(2k-2)}{n(n-1)}\p(A_{m,k})=\\
4(1-\frac{m}{n})^2(\frac{m}{n})^2(1-4(1-\frac{m}{n})(\frac{m}{n})(1+5(1-\frac{m}{n})\frac{m}{n}))n+o(n),
\end{multline*}


\begin{multline*}
Var(\delta_{m})=\e[\delta_{m}](1-\e[\delta_{m}])\\
+\sum\limits_{k=1}^{m}\frac{(m-k)[(m-k-1)(2n-m+k-2)(2n-m+k-3)+2(n-2)(n-1)]}{2n(n-1)}\p(A_{m,k})=\\
(\frac{m}{n})^2(1-(\frac{m}{n})^2)^2(4+\frac{m}{n}(-8+\frac{5m}{n}))n+o(n),
\end{multline*}

\begin{flalign*}
where &~~~~\p(A_{m,k})=\frac{\left(\begin{array}{c}
m-1 \\ 
k-1
\end{array}\right) \left(\begin{array}{c}
n-m\\ 
k
\end{array}\right)}{\left(\begin{array}{c}
n-1 \\ 
m
\end{array}\right)}.&
\end{flalign*}

\end{footnotesize}
\end{theorem}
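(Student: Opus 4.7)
My approach mirrors the indicator-variable computation used in the proof of Theorem \ref{expectation}. Letting $\hat X_{m,i}$ denote any of $\hat\alpha_{m,i}, \hat\beta_{m,i}, \hat\gamma_{m,i}, \hat\delta_{m,i}$ and setting $X_{m,k}=\sum_{i=1}^{n-1}\hat X_{m,i}$ (conditional on $A_{m,k}$), expanding $\e[X_{m,k}^2]$ and using $\hat X_{m,i}^2=\hat X_{m,i}$ yields the identity
\[
Var(X_{m,k}) = \e[X_{m,k}]\bigl(1-\e[X_{m,k}]\bigr) + \sum_{i\neq j}\e[\hat X_{m,i}\hat X_{m,j}\mid A_{m,k}],
\]
which already has the skeleton of the target formulas. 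Since $\e[X_{m,k}]$ is supplied by Theorem \ref{expectation}, the entire proof reduces to evaluating the double sum of joint probabilities.

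For these joint probabilities I will split the ordered pairs $(i,j)$ with $i\neq j$ into \emph{adjacent} ($|i-j|=1$, contributing $2(n-2)$ pairs and involving $3$ distinct positions of $\xi$) and \emph{non-adjacent} ($|i-j|\geq 2$, contributing $(n-2)(n-3)$ pairs and involving $4$ distinct positions). Conditioning on $A_{m,k}$, the set $I$ partitions $[n]$ into $n-m-k$ isolated points, $2k$ end points, and $m-k$ intrinsic points; each joint probability then becomes a quotient of falling factorials in these sizes over $n(n-1)(n-2)$ or $n(n-1)(n-2)(n-3)$, depending on whether $3$ or $4$ distinct positions of $\xi$ are involved. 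For the $\alpha$-case both adjacencies must consist of isolated points, so writing $r:=n-m-k$ the two contributions are $(n-2)(n-3)\cdot\frac{r(r-1)(r-2)(r-3)}{n(n-1)(n-2)(n-3)}$ and $2(n-2)\cdot\frac{r(r-1)(r-2)}{n(n-1)(n-2)}$, whose sum collapses to $\frac{r(r-1)^2(r-2)}{n(n-1)}$. The $\gamma$-case is completely analogous with $2k$ in place of $r$, and the $\delta$-case is similar once one recognises that a 0-free-end adjacency is precisely one containing an intrinsic point. The $\beta$-case requires more bookkeeping, since a 1-free-end adjacency has two orientations (iso-end or end-iso), and for adjacent pairs the shared middle entry may be on either side; separating into subcases and summing produces the stated factor $(r-1)(4k-1)+(2k-1)$.

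The unconditional variances $Var(X_m)$ are obtained via the total-expectation identity
\[
\sum_{i\neq j}\e[\hat X_{m,i}\hat X_{m,j}] = \sum_{k=1}^{m}\p(A_{m,k})\sum_{i\neq j}\e[\hat X_{m,i}\hat X_{m,j}\mid A_{m,k}],
\]
plugged into $Var(X_m)=\e[X_m](1-\e[X_m])+\sum_{i\neq j}\e[\hat X_{m,i}\hat X_{m,j}]$. Substituting $\p(A_{m,k})$ from Proposition \ref{segmentset} and the expectation formulas from Theorem \ref{expectation} produces the closed-form expressions in the statement; the asymptotic $n+o(n)$ tails then follow by writing each polynomial in the ratios $m/n$ and $k/n$ and retaining only the leading power of $n$.

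The main technical obstacle I anticipate is the $\beta$ bookkeeping above, together with verifying that the closed-form sums over $k$ appearing in $Var(X_m)$ collapse into the concise polynomials shown. These are hypergeometric-type sums whose evaluation is tedious but standard, reducing to routine algebraic manipulation once the adjacent/non-adjacent cases and the $\beta$-orientations have been correctly separated.
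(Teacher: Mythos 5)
Your proposal is correct and follows essentially the same route as the paper: expand the second moment of the indicator sums, split the index pairs into adjacent ($|i-j|=1$, three distinct positions) and non-adjacent ($|i-j|\geq 2$, four distinct positions) cases, evaluate the joint probabilities as falling-factorial ratios determined by the counts $n-m-k$, $2k$, $m-k$ of isolated, end and intrinsic points, and recover the unconditional variances by averaging the joint-moment sums over $k$ with $\p(A_{m,k})$ from Proposition~\ref{segmentset}. The worked $\alpha$-case (collapsing to $\frac{r(r-1)^2(r-2)}{n(n-1)}$ with $r=n-m-k$) matches the paper's computation exactly, and your level of detail for the remaining cases mirrors the paper's ``similarly'' treatment.
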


\begin{proof}
For $i=1,...,n-1$, recall the definition of $\hat\alpha_{m,i}$, $\hat\beta_{m,i}$, $\hat\gamma_{m,i}$ and $\hat\delta_{m,i}$ from the proof of Theorem \ref{expectation}, and similarly, let $\hat\alpha_{m,k,i}$, $\hat\beta_{m,k,i}$, $\hat\gamma_{m,k,i}$ and $\hat\delta_{m,k,i}$ be random variables such that $\hat\alpha_{m,k,i}=1$ if the $i$-th adjacency of $\xi$, i.e. $\{\xi_i,\xi_{i+1}\}$, is 2-free-end w.r.t. $I_{m,k}$ and $\hat\alpha_{m,k,i}=0$ otherwise; $\hat\beta_{m,k,i}=1$ if the $i$-th adjacency of $\xi$ is 1-free-end w.r.t. $I_{m,k}$ and $\hat\beta_{m,k,i}=0$ otherwise; $\hat\gamma_{m,k,i}=1$ if the $i$-th adjacency of $\xi$ is trivial segment w.r.t. $I_{m,k}$ and $\hat\gamma_{m,k,i}=0$ otherwise; and $\hat\delta_{m,k,i}=1$ if the $i$-th adjacency of $\xi$ is 0-free-end w.r.t. $I_{m,k}$ and $\hat\delta_{m,k,i}=0$ otherwise. Then, for every $i=1,...,n-1$, we have:
\[
\begin{array}{l}
\e[\alpha_{m,k}^2]=\sum\limits_i\e[\hat\alpha_{m,k,i}^2]+2\sum\limits_{i> j}\e[\hat\alpha_{m,k,i} \hat\alpha_{m,k,j}]=\\
\sum\limits_i \p(\hat\alpha_{m,k,i}^2=1)+2\sum\limits_{i> j}\p(\hat\alpha_{m,k,i} \hat\alpha_{m,k,j}=1)=\\
\sum\limits_i \p(\hat \alpha_{m,k,i}=1)+2\sum\limits_{i> j}\p(\hat \alpha_{m,k,i} \hat\alpha_{m,k,j}=1)=\\
\e[\alpha_{m,k}]+2\sum\limits_{i> j}\p(\hat \alpha_{m,k,i} \hat\alpha_{m,k,j}=1).
\end{array}
\]
Now, note that:

\begin{small}
\[
\begin{array}{l}
{\displaystyle \sum\limits_{i>j+1}\p(\hat \alpha_{m,k,i} \hat\alpha_{m,k,j}=1)=\sum\limits_{i>j+1}\frac{(n-m-k)(n-m-k-1)(n-m-k-2)(n-m-k-3)}{n(n-1)(n-2)(n-3)}}\\
{\displaystyle =\frac{(n-m-k)(n-m-k-1)(n-m-k-2)(n-m-k-3)}{2n(n-1)}},
\end{array}
\]
\end{small}
and
\begin{small}
\[
\begin{array}{l}
{\displaystyle \sum\limits_{i=j+1}\p(\hat \alpha_{m,k,i} \hat\alpha_{m,k,j}=1)=\sum\limits_{i=j+1}\frac{(n-m-k)(n-m-k-1)(n-m-k-2)}{n(n-1)(n-2)}}\\
{\displaystyle =\frac{(n-m-k)(n-m-k-1)(n-m-k-2)}{n(n-1)}}.
\end{array}
\]
\end{small}
Hence,
\begin{small}
\[
\begin{array}{l}
{\displaystyle Var(\alpha_{m,k})=\e[\alpha_{m,k}^2]-(\e[\alpha_{m,k}])^2}\\
{\displaystyle =\e[\alpha_{m,k}](1-\e[\alpha_{m,k}])+\frac{(n-m-k)(n-m-k-1)^2(n-m-k-2)}{n(n-1)}.}
\end{array}
\]
\end{small}
Exactly the same calculations give $Var(\alpha(\xi,I))$. Similarly we can compute $Var(\beta_{m,k})=Var(\beta(\xi,I))$, $Var(\gamma_{m,k})=Var(\gamma(\xi,I))$ and $Var(\delta_{m,k})=Var(\delta(\xi,I))$.\\

\noindent Now to compute $Var(\alpha_m)$ write
\[
\begin{array}{l}
\e[\alpha_m^2]=\sum\limits_i\e[\hat\alpha_{m,i}^2]+2\sum\limits_{i> j}\e[\hat\alpha_{m,i} \hat\alpha_{m,j}]=\sum\limits_i \p(\hat \alpha_{m,i}^2=1)+2\sum\limits_{i> j}\p(\hat \alpha_{m,i} \hat \alpha_{m,j}=1)=\\
\sum\limits_i \p(\hat \alpha_{m,i}=1)+2\sum\limits_{i> j}\p(\hat \alpha_{m,i} \hat \alpha_{m,j}=1)=\e[\alpha]+2\sum\limits_{i> j}\p(\hat \alpha_{m,i} \hat \alpha_{m,j}=1).
\end{array}
\]
Now, we note that:

\begin{footnotesize}
\begin{flalign*}
\sum\limits_{i>j+1}\p(\hat \alpha_{m,i}\cdot\hat  \alpha_{m,j}=1)&=&
\end{flalign*}

\begin{flalign*}
&\sum\limits_{i>j+1}\sum\limits_{k=1}^{m}\frac{(n-m-k)(n-m-k-1)(n-m-k-2)(n-m-k-3)}{n(n-1)(n-2)(n-3)}\p(A_{m,k})&
\end{flalign*}

\begin{flalign*}
&=\sum\limits_{k=1}^{m}\frac{(n-m-k)(n-m-k-1)(n-m-k-2)(n-m-k-3)}{2n(n-1)}\p(A_{m,k}),&
\end{flalign*}

\end{footnotesize}

\noindent and

\begin{footnotesize}
\begin{flalign*}
\sum\limits_{i=j+1}\p(\hat \alpha_{m,i} \hat \alpha_{m,j}=1)&=\sum\limits_{i=j+1}\sum\limits_{k=1}^{m}\frac{(n-m-k)(n-m-k-1)(n-m-k-2)}{n(n-1)(n-2)}\p(A_{m,k})&\\
&=\sum\limits_{k=1}^{m}\frac{(n-m-k)(n-m-k-1)(n-m-k-2)}{n(n-1)}\p(A_{m,k}).&
\end{flalign*}
\end{footnotesize}



\noindent Therefore,

\begin{footnotesize}
\begin{flalign*}
~~~~&Var(\alpha_m)=\e[\alpha_m^2]-(\e[\alpha_m])^2&\\
&=\e[\alpha_m](1-\e[\alpha_m])+\sum\limits_{k=1}^{m}\frac{(n-m-k)(n-m-k-1)^2(n-m-k-2)}{n(n-1)}\p(A_{m,k})&\\
&=\e[\alpha_m](1-\e[\alpha_m])&\\
&+\frac{(n-m) (n-m-1)^2 (n-m-2)^2 (n-m-3) \left(n^2-5 n+4-2 m n+m (m+7)\right)}{n(n-1)^2(n-2)(n-3)(n-4)}&\\
&=(1-\frac{m}{n})^4(\frac{m}{n})^2 (8+\frac{m}{n}(-12+5\frac{m}{n}))n+o(n).&
\end{flalign*}
\end{footnotesize}
\noindent Similarly we can show that

\begin{footnotesize}
\begin{flalign*}
~~~~&Var(\beta_m)=\e[\beta_m^2]-(\e[\beta_m])^2&\\
&=\e[\beta_m](1-\e[\beta_m])+\sum\limits_{k=1}^{m}\frac{4k(n-m-k)((n-m-k-1)(4k-1)+2k-1)}{n(n-1)}\p(A_{m,k})&\\
&=\e[\beta_m](1-\e[\beta_m])+\left(\frac{4 m (m-n) (m-n+1)^2}{(n-4) (n-3) (n-2)(n-1)^2 n}\right)\times &\\
& \left((1-4 m) n^3+(4 m (3 m+5)-3) n^2-(m+1) (3 m (4 m+11)+1) n+4 (m+1)^2 (m (m+4)+1)\right) &\\
&=4(1-\frac{m}{n})^3(\frac{m}{n})^2(8-\frac{m}{n}(31+4\frac{m}{n}(-11+5\frac{m}{n})))n+o(n),&
\end{flalign*}
\end{footnotesize}

\begin{footnotesize}
\begin{flalign*}
~~~~&Var(\gamma_m)=\e[\gamma_m^2]-(\e[\gamma_m])^2&\\
&=\e[\gamma_m](1-\e[\gamma_m])+\sum\limits_{k=1}^{m}\frac{2k(2k-1)^2(2k-2)}{n(n-1)}\p(A_{m,k})=&\\
&=\e[\gamma_m](1-\e[\gamma_m])&\\
&+\frac{4 (m-1) m (m-n) (m-n+1) \left(4 m^4-8 m^3 n+4 m^2 \left(n^2+n+3\right)-4 m n (n+3)+n (n+9)-4\right)}{(n-4) (n-3) (n-2)(n-1)^2 n}&\\
&=4(1-\frac{m}{n})^2(\frac{m}{n})^2(1-4(1-\frac{m}{n})(\frac{m}{n})(1+5(1-\frac{m}{n})\frac{m}{n}))n+o(n),&
\end{flalign*}
\end{footnotesize}
and finally,

\begin{footnotesize}
\begin{flalign*}
&Var(\delta_m)=\e[\delta_m^2]-(\e[\delta_m])^2&\\
&=\e[\delta_m](1-\e[\delta_m])&\\
&+\sum\limits_{k=1}^{m}\frac{(m-k)[(m-k-1)(2n-m+k-2)(2n-m+k-3)+2(n-2)(n-1)]}{2n(n-1)}\p(A_{m,k})&\\
&=\e[\delta_m](1-\e[\delta_m])&\\
&+\frac{(m-1)m}{(n-4) (n-3) (n-2) (n-1)^2 n} \times \left\lbrace (m-5) m \left(m \left(m^3-10 m^2+m+40\right)+4\right)+4 (m-4) (m+1) n^4 \right. &\\
&\left. +2 (9-23 (m-3) m) n^3+2 (m (m (51-2 (m-8) m)-235)+50) n^2\right. &\\
&\left. +2 m (m (13 (m-8) m+121)+170) n+2 n^5-152 n+48\right\rbrace &\\
&=(\frac{m}{n})^2(1-(\frac{m}{n})^2)^2(4+\frac{m}{n}(-8+\frac{5m}{n}))n+o(n).&
\end{flalign*}
\end{footnotesize}
\end{proof}

We are ready to state a convergence theorem for all different types of adjacencies of $\xi^{(n)}$ w.r.t. $I_{m(n),k(n)}^{(n)}$ or $I_{m(n)}^{(n)}$. Let $m:\N\to \N$ and $k:\N\to \N$ be such that $1\leq k(n)\leq m(n)\leq n-k(n)$, for any $n\in \N$. Also, let $(\hat{I}_n)_{n\in \N}$ be an arbitrary sequence of segment sets that $\hat{I}_n\in \mathcal{I}_{m(n),k(n)}^{(n)}$. Denote
\[
\begin{array}{l}
\tilde{\alpha}_n:=\alpha(\xi^{(n)},I_{m(n)}^{(n)}), \ and\\
\bar{\alpha}_n:=\alpha(\xi^{(n)},I_{m(n),k(n)}^{(n)}).\\
\end{array}
\]
Similarly, for $n\in \N$, we define $\tilde \beta_n, \tilde \gamma_n, \tilde \delta_n$, and $\bar \beta_n, \bar \gamma_n, \bar \delta_n$.
\begin{theorem}\label{convergence in probability}
Suppose $\frac{m(n)}{n}\to c$ and $\frac{k(n)}{n}\to c'$, as $n \to \infty$. Then, as $n\to \infty$
\begin{footnotesize}
\[
\begin{array}{l}
{\displaystyle \frac{\tilde{\alpha}_n}{n}\overset{L^2,p}{\longrightarrow} (1-c)^4}, \\\\
{\displaystyle\frac{\tilde{\beta}_n}{n}\overset{L^2,p}{\longrightarrow} 4c(1-c)^3},\\\\
{\displaystyle \frac{\tilde{\gamma}_n}{n}\overset{L^2,p}{\longrightarrow} 4c^2(1-c)^2},\\\\
{\displaystyle \frac{\tilde{\delta}_n}{n}\overset{L^2,p}{\longrightarrow} c^2(2-c)^2},\\\\
{\displaystyle \frac{\bar{\alpha}_n}{n} \ , \ \frac{\alpha (\xi^{(n)},\hat{I}_n)}{n}\overset{L^2,p}{\longrightarrow} (1-c-c')^2},\\\\
{\displaystyle \frac{\bar{\beta}_n}{n} \ , \ \frac{\beta (\xi^{(n)},\hat{I}_n)}{n}\overset{L^2,p}{\longrightarrow} 4c'(1-c-c')},\\\\
{\displaystyle \frac{\bar{\gamma}_n}{n} \ , \ \frac{\gamma (\xi^{(n)},\hat{I}_n)}{n}\overset{L^2,p}{\longrightarrow} 4c'^2},\\\\
{\displaystyle \frac{\bar{\delta}_n}{n} \ , \ \frac{\delta (\xi^{(n)},\hat{I}_n)}{n}\overset{L^2,p}{\longrightarrow} (c-c')(2-c+c')}.\\\\
\end{array}
\]
\end{footnotesize}


\end{theorem}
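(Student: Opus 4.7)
The plan is to reduce each of the eight convergence statements to the standard second moment method, using Theorems \ref{expectation} and \ref{var} as black boxes. Writing $X_n$ for the generic sequence and $\ell$ for the claimed limit, the identity
\[
\e\!\left[\left(\frac{X_n}{n}-\ell\right)^2\right] = \frac{Var(X_n)}{n^2} + \left(\frac{\e[X_n]}{n}-\ell\right)^2
\]
reduces $L^2$ convergence to the two ingredients $\e[X_n]/n \to \ell$ and $Var(X_n)/n^2 \to 0$. Convergence in probability then follows from $L^2$ convergence by Chebyshev's inequality (applied to $X_n/n - \ell$).

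For the means, Theorem \ref{expectation} expresses each quantity as an explicit polynomial ratio in $n$ (and $m,k$), and the claimed $\ell$ is exactly its leading asymptotic under $m/n \to c$ and $k/n\to c'$. Concretely, for $\tilde{\alpha}_n$,
\[
\frac{\e[\tilde{\alpha}_n]}{n} = \frac{(n-m)(n-m-1)^2(n-m-2)}{n^2(n-1)(n-2)} = \left(1-\frac{m}{n}\right)^4(1+o(1)) \longrightarrow (1-c)^4,
\]
and the analogous factoring handles $\tilde{\beta}_n,\tilde{\gamma}_n,\tilde{\delta}_n$. For the conditional variants (with $I_{m,k}^{(n)}$), the simpler formulas $(n-m-k)(n-m-k-1)/n$, $4k(n-m-k)/n$, $2k(2k-1)/n$, and $(m-k)(2n-m+k-1)/n$ each converge, after dividing by $n$, to $(1-c-c')^2$, $4c'(1-c-c')$, $4c'^2$, and $(c-c')(2-c+c')$ respectively.

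For the variances, Theorem \ref{var} already isolates each as $(\text{leading constant})\cdot n + o(n)$, so $Var(X_n)/n^2 = O(1/n) \to 0$ in every one of the eight cases. Feeding both ingredients into the decomposition above settles the $L^2$ and in-probability convergence for the quantities involving the random segment sets $I_m^{(n)}$ and $I_{m,k}^{(n)}$. The statements for a deterministic sequence $\hat{I}_n\in \mathcal{I}_{m(n),k(n)}^{(n)}$ require no additional argument: the conditional expectations $\e[\alpha(\xi^{(n)},I)]$, etc., in Theorem \ref{expectation} and the corresponding conditional variances in Theorem \ref{var} depend on $I$ only through $|I|=m$ and $\|I\|=k$, so the same identities apply verbatim to $I=\hat{I}_n$ and the same second moment argument concludes.

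There is no real conceptual obstacle; the only thing to be vigilant about is the bookkeeping required to extract the leading-order coefficient from each polynomial ratio and to confirm that it matches the stated limit. That verification is purely mechanical, and, in effect, the theorem is a direct corollary of Theorems \ref{expectation} and \ref{var} via Chebyshev.
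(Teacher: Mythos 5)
Your proposal is correct and is essentially the paper's own proof: the authors likewise obtain the $L^2$ and in-probability convergences by combining $\e[X_n]/n\to\ell$ (Theorem \ref{expectation}) with $Var(X_n)/n^2\to 0$ (Theorem \ref{var}), the deterministic sequence $\hat I_n$ being covered because those formulas depend on $I$ only through $(m,k)$. The one point your ``mechanical verification'' would actually flag is that the leading coefficient of $\e[\tilde\delta_n]/n$ computed from Theorem \ref{expectation} is $c^2(2-c^2)$ --- as it must be, since the limits for $\tilde\alpha_n,\tilde\beta_n,\tilde\gamma_n,\tilde\delta_n$ sum to $1$ --- rather than the stated $c^2(2-c)^2$, a typo in the theorem's statement (shared by the paper's proof) rather than a defect of your argument.
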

\begin{proof}
First observe that, by Theorem \ref{expectation}, as $n\to \infty$,
\[
\begin{array}{l}
{\displaystyle \e[\frac{\tilde{\alpha}_n}{n}]\to (1-c)^4}, \\\\
{\displaystyle \e[\frac{\tilde{\beta}_n}{n}]\to 4c(1-c)^3},\\\\
{\displaystyle \e[\frac{\tilde{\gamma}_n}{n}]\to 4c^2(1-c)^2},\\\\
{\displaystyle \e[\frac{\tilde{\delta}_n}{n}]\to c^2(2-c)^2},\\\\
{\displaystyle \e[\frac{\bar{\alpha}_n}{n}] \ , \ \e[\frac{\alpha (\xi^{(n)},\hat{I}_n)}{n}]\to (1-c-c')^2},\\\\
{\displaystyle \e[\frac{\bar{\beta}_n}{n}] \ , \ \e[\frac{\beta (\xi^{(n)},\hat{I}_n)}{n}]\to 4c'(1-c-c')},\\\\
{\displaystyle \e[\frac{\bar{\gamma}_n}{n}] \ , \ \e[\frac{\gamma (\xi^{(n)},\hat{I}_n)}{n}]\to 4c'^2},\\\\
{\displaystyle \e[\frac{\bar{\delta}_n}{n}] \ , \ \e[\frac{\delta (\xi^{(n)},\hat{I}_n)}{n}]\to (c-c')(2-c+c')}.\\\\
\end{array}
\]
Also, following Theorem \ref{var}, the variances of all these sequences converge to $0$. Hence, the convergence in $L^2$ and in probability holds.
\end{proof}

Let $I$ be a segment set of $id^{(n)}$. In order to construct a permutation $x\in X_n(I)$, we need to find a segment set of $S_n$, namely $J$, such that $I\cap J=\emptyset$ and $I\cup J=\mathcal{A}_\pi$, for a permutation $\pi$. Then, $x$ is constructed by completing the segment set $J$. Conversely, when a permutation $x\in X_n(I)$ is given, an easy observation shows that there exists at least one permutation $\pi$ containing $I$ such that $J=\mathcal{A}_\pi\setminus I \subset \mathcal{A}_x$ and all 2-free-end adjacencies of $x$ are used in $\pi$ (Lemma \ref{lemma-J-interior}). For the moment, let us denote by $J\strut^\mathrm{o}$, the segment set of $x$ containing all 2-free-end adjacencies of $x$ w.r.t. $I$, and note that we must have $J\strut^\mathrm{o}\subset J$. So in order to find the permutation $\pi$ with the above property, we first take the segment set $I\cup J\strut^\mathrm{o}$. In fact, $\mathcal{A}_\pi \setminus (I\cup J\strut^\mathrm{o})$ should still be a segment set of $x$, and $n-1-|I|-|J\strut^\mathrm{o}|$ more adjacencies of $x$ (1-free-end adjacencies and trivial segments) should be taken in order to complete $I\cup J\strut^\mathrm{o}$. To analyse this further, we define this more formally as follows. Let $F$ be a function
\[
F:\bigcup\limits_{n\in \N}(S_n\times \mathcal{I}^{(n)})\rightarrow \mathcal{I}^{(n)},
\]
where for any permutation $x\in S_n$ and any segment set $I\in \mathcal{I}^{(n)}$, $F(x,I)$ is the segment set of $x$ containing all 2-free-end adjacencies of $x$ w.r.t. $I$, that is
\[
F(x,I):=\{\{l,l'\}\in \mathcal{A}_x : \ \{l,l'\} \ is \ 2-free-end\}.
\]
Let
\[
Q:\bigcup\limits_{n\in \N}(S_n\times \mathcal{I}^{(n)})\rightarrow \N_0,
\]
where for $(x,I)\in S_n\times \mathcal{I}^{(n)}$, $Q(x,I)$ is the number of adjacencies needed in order to complete $I\cup F(x,I)$ to a permutation $\pi$, that is
\[
Q(x,I)=n-1-|I|-|F(x,I)|.
\]
The following theorem restricts the range of $Q(x,I)$, for $x\in X_n(I)$.
\begin{theorem}\label{interval-theorem}
Let $I\in \mathcal{I}^{(n)}$, and $x\in X_n(I)$. Then
\[
\|I\|-1\leq Q(x,I)\leq 2\|I\|.
\]
\end{theorem}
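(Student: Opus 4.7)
The plan is to fix a permutation $\pi$ witnessing $x\in X_n(I)$, so $\mathcal{A}_\pi = I \cup J$ with $I\cap J=\emptyset$ and $J\subset \mathcal{A}_x$, and to invoke the upcoming Lemma~\ref{lemma-J-interior}, which guarantees $F(x,I)\subset J$. This turns $Q(x,I)=|J|-|F(x,I)|$ into the number of adjacencies of $J$ that are not 2-free-end with respect to $I$. A structural observation I would record first is that every adjacency in $J$ has both endpoints in $Iso(I)\cup End(I)$: these endpoints cannot be intrinsic to $I$, because each intrinsic point of $I$ is already saturated by two $I$-adjacencies in $\pi$. Hence each adjacency of $J$ is 2-free-end, 1-free-end, or a trivial segment, and I only need to count the last two kinds.

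Next I would exploit the alternating structure of $I$- and $J$-segments along $\pi$. Writing $k:=\|I\|$ and $k':=\|J\|$, the runs of $I$ and $J$ in the label sequence of $\pi$ force $k' \in \{k-1, k, k+1\}$. For each $s\,\hat{\in}\, J$ let $b_s\in\{0,1,2\}$ be the number of endpoints of $s$ (viewed as an interval of $\pi$) that coincide with an end of a neighbouring $I$-segment, and let $e_I\in\{0,1,2\}$ be the number of $I$-segment ends landing at the two extremes of $\pi$. Double-counting transitions between $I$- and $J$-runs gives $\sum_{s\,\hat{\in}\, J} b_s = 2k - e_I$. A per-segment check shows that a $J$-segment of length $\geq 2$ contains exactly $b_s$ non-2-free-end adjacencies (its border ones, the rest being 2-free-end), while a $J$-segment of length $1$ contains $\min(b_s,1)$.

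For the upper bound, summing these contributions yields
\[
Q(x,I)\ \le\ \sum_{s\,\hat{\in}\, J} b_s\ =\ 2k-e_I\ \le\ 2k\ =\ 2\|I\|.
\]
For the lower bound, the case $\|I\|=0$ is trivial, so assume $k\geq 1$. Then no $J$-segment can have $b_s=0$: that would force its interval to occupy both extremes of $\pi$, hence $J=\mathcal{A}_\pi$ and $I=\emptyset$, contradicting $k\geq 1$. So every one of the $k'$ segments of $J$ contributes at least one non-2-free-end adjacency, and $Q(x,I)\geq k' \geq k-1 = \|I\|-1$.

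I expect the only delicate point to be the separate accounting for length-one $J$-segments with $b_s=2$: such a segment is a single trivial-segment adjacency and contributes only $1$ to $Q$, not $2$, which is precisely why $\sum_s b_s$ furnishes a valid upper bound but not a matching lower bound. Getting this bookkeeping right, and linking $e_I$ and $k'$ correctly to the alternating pattern of $I$- and $J$-runs along $\pi$, is what requires care; the rest follows from the case analysis sketched above.
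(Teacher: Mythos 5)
Your argument is correct and follows essentially the same route as the paper: both fix the witness permutation $\pi$ supplied by Lemma~\ref{lemma-J-interior}(b) (note you genuinely need part (b), the existential statement, since an arbitrary witness of $x\in X_n(I)$ may omit one adjacency of $F(x,I)$) and then bound $Q(x,I)=|J\setminus F(x,I)|$ via the alternating structure of $I$- and $J$-runs along $\pi$. Your double count $\sum_{s\,\hat{\in}\,J} b_s = 2\|I\|-e_I$ is just tidier bookkeeping for the paper's case analysis (at most two adjacencies of $\mathcal{A}_x\setminus F(x,I)$ between neighbouring $I$-segments and at most one at each extremity), and your per-$J$-segment lower bound $Q\geq \|J\|\geq \|I\|-1$ corresponds to the paper's count of the connectors needed when $F(x,I)$ is empty.
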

Before proving the above theorem, we introduce a new concept. Let $I$ be a segment set of $S_n$. The freedom factor of a point (number) $k\in [n]$, is $0$ if $k\in Int(I)$. It is $1$, if $k\in End(I)$. Finally, it is $2$, if $k\in Iso(E)$. Similarly,  the freedom factor of a segment $s=[v_1,...,v_l]=[v_l,...,v_1]$ is denoted by $u=<u_1,...,u_l>=<u_l,...,u_1>$, where for each $i\in [l]$, $u_i$ is the freedom factor of $v_i$. A segment $s$, with the freedom vector $u$ is called a $u$-segment. Also, for $\pi\in S_n$ and $i\in [n]$, the set of neighbours of $i$ in $\pi$ is defined by
\[
\mathcal{N}_\pi(i):=\{j\in [n]: \ \{i,j\}\in \A_\pi\}.
\]
For an arbitrary segment set of $S_n$, namely $I$, in order that $x\in X_n(I)$, we need to find a segment set $J$ contained in $x$ such that $I\cup J=\mathcal{A}_\pi$ and $I\cap J=\emptyset$. As we mentioned, $J$ may not have all adjacencies of $F(x,I)$. For instance, let $I=[4,5,6,7]$ and $x=6 \ 4 \ 1 \ 3 \ 8 \ 10 \ 2 \ 9 \ 7 \ 5$. Then $x\in X_{10}(I)$ and $J_1=\{[3,1,4],[7,9,2,10,8]\}$ have the required property, while it does not contain the adjacency $\{3,8\} \in F(x,I)$. However, even in this case, we see that there are segment sets $J_2=[9,2,10,8,3,1,4]$ and $J_3=[1,3,8,10,2,9,7]$ including all adjacencies of $F(x,I)$ both with the required properties. In fact, in the following lemma we can see that there are not many adjacencies of $F(x,I)$ that can be ignored in the construction of $\pi$ from $x$ and $I$.
\begin{lemma}\label{lemma-J-interior}
Let $I$ be a segment set of $\mathcal{I}^{(n)}$, and $x\in X_n(I)$.
\begin{itemize}
\item[a)] Let $\pi\in S_n$ be such that $I\subset \mathcal{A}_\pi$, and $\mathcal{A}_\pi\setminus I \subset \mathcal{A}_x$. Then either $F(x,I)\subset \mathcal{A}_\pi \setminus I$, or there exists an adjacency of $F(x,I)$, namely $e \in F(x,I)$ such that $F(x,I)\setminus \{e\} \subset \mathcal{A}_\pi\setminus I$.
\item[b)] There always exists a permutation $\pi\in S_n$ such that $I\subset \mathcal{A_\pi}$, and $F(x,I)\subset \mathcal{A}_\pi\setminus I\subset \mathcal{A}_x$.
\end{itemize}
\end{lemma}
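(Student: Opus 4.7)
The plan for part (a) is to exploit the fact that any vertex isolated in $I$ has all of its $\pi$-adjacencies forced to lie in $J := \mathcal{A}_\pi \setminus I \subset \mathcal{A}_x$. Indeed, if $e = \{a,b\} \in F(x,I)$ is not an adjacency of $\pi$, then the $\pi$-neighbours of $a$ are contained in $\mathcal{N}_x(a) \setminus \{b\}$, which has at most one element; thus $a$ has $\pi$-degree at most one and must be an endpoint of $\pi$. Symmetrically, $b$ is an endpoint of $\pi$. Since a Hamiltonian path has only two endpoints, any two such ``missing'' edges share both endpoints and so must coincide, establishing $|F(x,I) \setminus \mathcal{A}_\pi| \leq 1$, which is exactly the dichotomy in (a).

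For part (b), I would start from any permutation $\pi$ realising $x \in X_n(I)$. By part (a) we are either done immediately (taking $\pi' := \pi$), or there is a unique $e = \{a,b\} \in F(x,I) \setminus J$ whose endpoints coincide with the two endpoints of $\pi$. In the latter case the edge set $I \cup J \cup \{e\}$ forms a Hamiltonian cycle on $[n]$, and I would obtain a new Hamiltonian path $\pi'$ by deleting a carefully chosen edge $e'$ from this cycle. The three required conditions---namely $I \subset \mathcal{A}_{\pi'}$, $F(x,I) \subset \mathcal{A}_{\pi'} \setminus I$, and $\mathcal{A}_{\pi'} \setminus I \subset \mathcal{A}_x$---all reduce, after a short check using $\mathcal{A}_{\pi'} = I \cup ((J \setminus \{e'\}) \cup \{e\})$ and $F(x,I) = (F(x,I) \cap J) \cup \{e\}$, to the single requirement that $e'$ be chosen from $J \setminus F(x,I)$.

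The main obstacle is therefore to show that $J \setminus F(x,I)$ is non-empty. This is settled by a counting argument. Since every vertex covered by a segment of $F(x,I)$ is isolated in $I$, one has
\[
|F(x,I)| + \|F(x,I)\| \;\leq\; |Iso(I)| \;=\; n - |I| - \|I\|,
\]
and in the relevant case $\|F(x,I)\| \geq 1$, giving $|F(x,I)| \leq n - |I| - \|I\| - 1$. Combining this with $|J| = n - 1 - |I|$ and $|J \cap F(x,I)| = |F(x,I)| - 1$ (provided by part (a)), I obtain
\[
|J \setminus F(x,I)| \;=\; |J| - |J \cap F(x,I)| \;\geq\; \|I\| + 1 \;\geq\; 1.
\]
Picking any $e' \in J \setminus F(x,I)$ and setting $\mathcal{A}_{\pi'} := (I \cup J \cup \{e\}) \setminus \{e'\}$ yields the desired permutation, since removing any edge from a Hamiltonian cycle leaves a Hamiltonian path.
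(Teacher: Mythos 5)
Your proof is correct, and part (a) is the same argument as the paper's: a missing $2$-free-end adjacency forces both of its points, which are isolated in $I$, to have $\pi$-degree one, hence to be the two extremities of $\pi$, so at most one adjacency of $F(x,I)$ can be missing. For part (b) you also use the same repair mechanism as the paper (join the missing edge $e=\{a,b\}$ across the two extremities and cut one edge of the resulting Hamiltonian cycle, with the correct observation that the cut edge must come from $J\setminus F(x,I)$), but you justify the existence of such an edge differently. The paper argues structurally: since the path from $a$ to $b$ in $\pi$ contains a segment of $I$, there is an adjacency of $\pi$ joining an end point of $I$ to an isolated point of $I$ (a $<1,2>$-adjacency in the paper's freedom-factor language), and such an adjacency lies in $\mathcal{A}_\pi\setminus I$ but not in $F(x,I)$; this implicitly uses $I\neq\emptyset$, which is harmless since the bad case cannot occur for empty $I$. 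You instead count: $|F(x,I)|+\|F(x,I)\|\leq |Iso(I)|=n-|I|-\|I\|$ together with $|J|=n-1-|I|$ and $|J\cap F(x,I)|=|F(x,I)|-1$ gives $|J\setminus F(x,I)|\geq \|I\|+1\geq 1$, which I checked and which is valid. Your counting version is slightly more robust (it needs no case analysis on where segments of $I$ sit and even yields a quantitative surplus of cuttable edges), whereas the paper's structural version identifies concretely where the cut can be made, an insight it reuses in the proof of Theorem \ref{interval-theorem}; either way the lemma follows.
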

\begin{proof}
Suppose $\{a,b\} \in F(x,I)\setminus \mathcal{A}_\pi$. As $a,b\in Iso(I)$ and therefore the neighbours of $a$ in $\pi$ should be from set $\mathcal{N}_x(a)\setminus \{b\}$ and the neighbours of $b$ in $\pi$ should be from set $\mathcal{N}_x(b)\setminus \{a\}$, we have $|\mathcal{N}_\pi(a)|,|\mathcal{N}_\pi(b)| \leq 1$. But $|\mathcal{N}_\pi(a)|$ and $|\mathcal{N}_\pi(b)|$ cannot be $0$, since in that case $a$ or $b$ cannot be connected to the rest of the numbers to construct $\pi$, and therefore $|\mathcal{N}_\pi(a)|=|\mathcal{N}_\pi(b)|=1$ which means that $a$ and $b$ are extremities of permutation $\pi$, i.e. $\{\pi_1,\pi_n\}=\{a,b\}$. In other words, there may exist at most one adjacency $\{a,b\}\in F(x,I)\setminus \mathcal{A}_\pi$. This proves part $(a)$. For part $(b)$, suppose $\pi'\in \overline{[id,x]}$ and there exists adjacency $\{a,b\}$ such that $\{a,b\}\in F(x,I)\setminus \pi'$. As we showed above $\{\pi'_1,\pi'_n\}=\{a,b\}$. Also, as $a$ and $b$ are connected in $\pi'$ through a segment of $\pi'$ containing at least one segment of $I$ and this means that there exists at least one $<1,2>$-adjacency ($<1,2>$-segment) in the segment of $\pi'$ connecting $a$ to $b$, namely $e$, and hence $e$ is not in $F(x,I)$. Therefore, we can construct a new permutation $\pi$ by cutting $e$ in $\pi'$ and joining $a$ to $b$. This proves part $(b)$.
\end{proof}
\begin{proof}[Proof of Theorem \ref{interval-theorem}]
The left inequality holds, since, when $F(x,I)$ is an empty segment set, we need at least $\|I\|-1$ $<1,1>$-segments (trivial segments) from $x$ to complete $\pi$. To prove the right inequality, let $\pi$ be a permutation such that $I\subset \A_\pi$ and $F(x,I)\subset \A_\pi\setminus I \subset \A_x$. From Lemma \ref{lemma-J-interior}, we know that such $\pi$ exists. As the freedom of every number in any segment of $F(x,I)$ is $2$, for two segments of $F(x,I)$, say $s_1,s_2$, the segment of $\pi$ that is located between them in $\pi$, say $s$, should necessarily contain at least one segment of $I$. In fact, the freedom of $s$ cannot be $<2,2,...,2>$ (since in that case $s_1\cup s\cup s_2$ should be a segment of $F(x,I)$ that is not supposed so) and then there must be at least a number in the segment $s$ with freedom  $1$, and this implies that a segment of $I$ must be contained in $s$. This yields that two segments of $F(x,I)$ cannot be connected to each other in $\pi$ without using at least a segment of $I$ between them. On the other hand, let $s_1,s_2$ be two segments of $I$, and call the segment of $\pi$ located between them in $\pi$, $s$. If $s$ does not contain a segment of $F(x,I)$ and does not contain a segment of $I$, then it must be either a $<1,1>$-segment (i.e. a trivial segment) or a $<1,2,1>$-segment. Lastly, let $s_1$ be a segment of $I$ and $s_2$ be a segment of $F(x,I)$ and let $s$ be a segment of $\pi$ that is located between $s_1$ and $s_2$ in $\pi$. If $s$ does not contain a segment of $I$, it should be a $<1,2>$-segment necessarily. Putting all these together, we conclude that between each pair of segments of $I$ in $\pi$, say $s_1,s_2$, we may need either a $<1,2,1>$-segment of $\A_x\setminus F(x,I)$ or at most one segment of $F(x,I)$. In the latter for each end of this segment from $F(x,I)$, we need a $<1,1>$-segment of $\A_x\setminus F(x,I)$ to connect it to $s_1$ and $s_2$. On the other hand, on the right-hand side (left-hand side) of the most right (left) segment of $I$ in $\pi$, we may place either a $<1,2>$-segment ($<2,1>$-segment) or a $<1,2>$-segment ($<2,1>$-segment) followed by a segment of $F(x,I)$ on its right (on its left). So in general, we need at most $2$ adjacencies of $\A_x\setminus F(x,I)$ between each pair $s_1,s_2\hat{\in} I$ which are neighbours with respect to $\pi$ and in extremities we need at most one adjacency of $\A_x\setminus F(x,I)$. In other words, we need at most $2(\|I\|-1)+2=2\|I\|$ adjacencies of $\A_x\setminus F(x,I)$ in order to complete $\pi$. This finishes the proof.
\end{proof}
\begin{center}
\begin{figure}[!ht]
\begin{center}
\includegraphics[scale=0.6, trim = 0.3cm 13.5cm 3.5cm 2.7cm,clip]{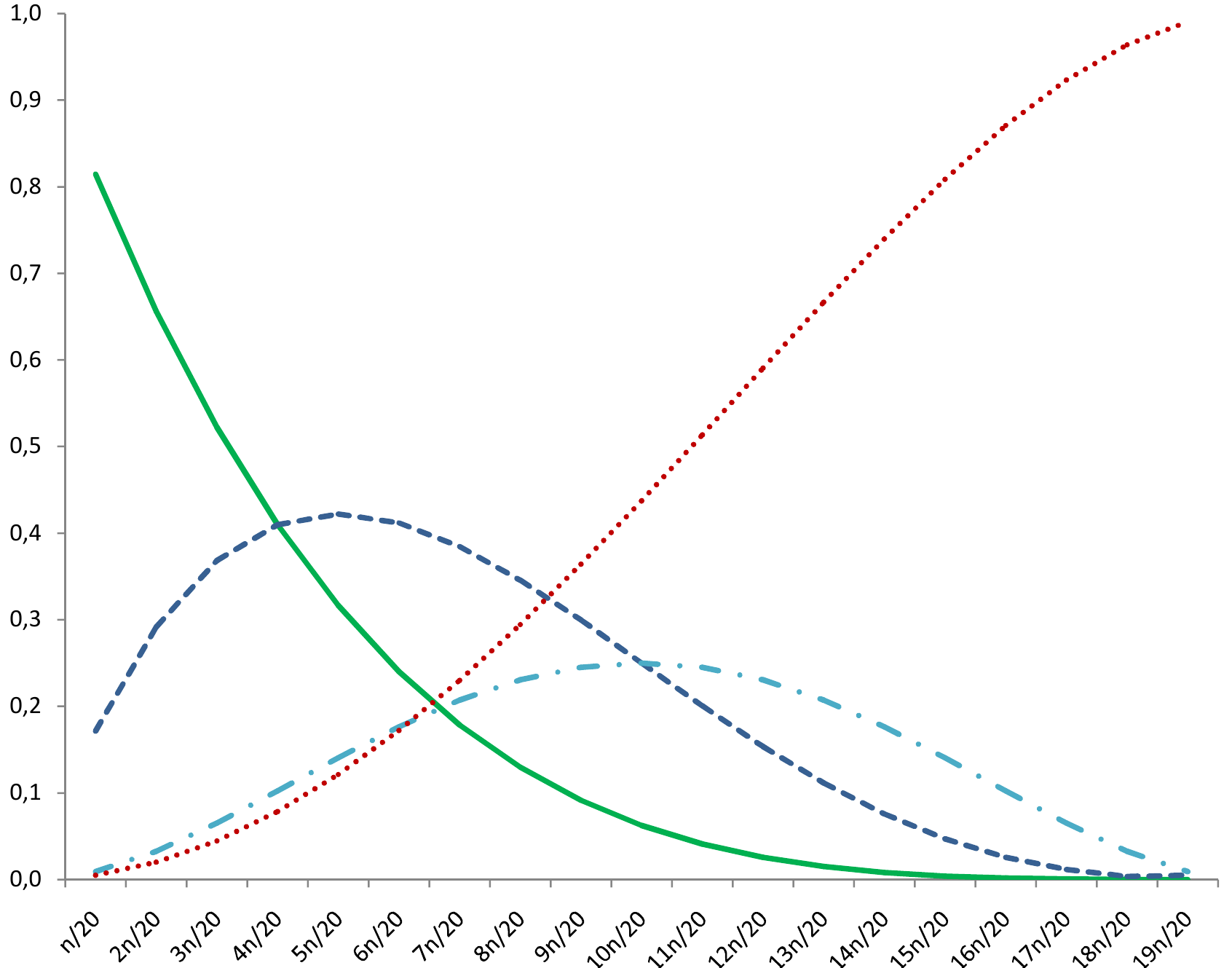}
\end{center}
\caption{The value of $\e[\alpha]/n$ (in green), $\e[\beta]/n$ (in dark blue), $\e[\delta]/n$ (in light blue) and $\e[\gamma]/n$ (in red), when we choose $\frac{n}{20}$, $\frac{2n}{20}$,$\frac{3n}{20}$...,$\frac{19n}{20}$ adjacencies of $id$.}\label{figexp}
\end{figure}
\end{center}
Let $(\hat{I}_n)_{n\in \N}$ be an arbitrary sequence of segment sets with $|\hat{I}_n|=m(n)$ and $\|\hat{I}_n\|=k(n)$ for $n\in \N$. As we already saw, to have $x\in X_n(\hat{I}_n)$, it is necessary to have $k(n)\leq m(n)\leq n-k(n)$ and also by Theorem \ref{interval-theorem}
\[
\|\hat{I}_n\|-1\leq Q(x,\hat{I}_n)\leq 2\|\hat{I}_n\|.
\]
Also, for $x\in X_n(\hat{I}_n)$, by definition we have,
\[
Q(x,\hat{I}_n)\leq \beta(x,\hat{I}_n)+\gamma(x,\hat{I}_n).
\]
Now suppose $m(n)/n\rightarrow c$ and $k(n)/n\rightarrow c'$, as $n\rightarrow \infty$, for $c,c'\in \R_+$. Then Theorem \ref{convergence in probability} implies that the right side of the above inequality converges to $4c'-4cc'$, in probability, as $n$ goes to $\infty$. Similarly, the left side of the last inequality converges to $1-c-(1-c-c')^2$, in probability, as $n\rightarrow \infty$. Now suppose $c,c'$ is such that
\[
1-c-(1-c-c')^2>4c'-4cc'.
\]
Let $\varepsilon <<1-c-(1-c-c')^2-4c'+4cc'$. Then
\[
\begin{array}{l}
\p[\xi^{(n)}\in X_n(\hat{I}_n)]\leq\\
\p[Q(\xi^{(n)},\hat{I}_n)\leq \beta(\xi^{(n)},\hat{I}_n)+\gamma(\xi^{(n)},\hat{I}_n)]\leq \\
\p[|\frac{Q(\xi^{(n)},\hat{I}_n)}{n}-(1-c-(1-c-c')^2)|>\varepsilon]+\\
\p[|\frac{\beta(\xi^{(n)},\hat{I}_n+\gamma(\xi^{(n)},\hat{I}_n)}{n}-(4c'-4cc')|>\varepsilon] \rightarrow 0,
\end{array}
\]
as $n\rightarrow 0$. So, to avoid this, we should assume $1-c-(1-c-c')^2 \leq 4c'-4cc'$. Similarly, we derive
\[
\left\{
	\begin{array}{l}
     1-c-(1-c-c')^2\leq 4c'-4cc',\\
     c'\leq 1-c-(1-c-c')^2\leq 2c',\\
     0< c'\leq c\leq 1-c'.
	\end{array}
\right.
\]

\section{Finding non-trivial partial geodesics}\label{section-main-1}

In this section we count the number of elements in $X_n(I)$ for a given segment set $I$. This gives an upper bound for the number of elements in $\bar{X}_n(I)$, by which we will be able to estimate the asymptotic behaviour of the probability of having a geodesic point of $id$ and $\xi^{(n)}$, far from both of them, as $n$ tends to $\infty$. In fact we can prove that this probability converges to $0$. This partly proves a conjecture stated by Haghighi and Sankoff in \cite{haghighi12}, for the case of two random permutations.

Recall the definition of the set of intrinsic points, end points and isolated points of a given segment set $I$ from Section~\ref{prem}, and as before denote them by $Int(I)$, $End(I)$ and $Iso(I)$, respectively. 

\begin{lemma}\label{complementary}
Let $x$ be a permutation in $S_n$, and let $I\in \mathcal{I}^{(n)}$ be a segment set
. There exist a permutation $\pi\in S_n$ containing $I$ such that $\A_{\pi}\setminus I \subset \A_x$ if and only if there exist $q,r\in [n]$ and a segment set $J$ contained in $x$ satisfying one of the following conditions:
\begin{enumerate}[(i)]
\item $\{q,r\}=End(I)\cap Iso(J)$, $\|J\|=\|I\|-1$, $Int(J)=Iso(I)$, $Iso(J)\setminus\{q,r\}=Int(I)$ and $End(J)=End(I)\setminus\{q,r\}$;
\item $\{r\}=End(J)\cap Iso(I)$ and $\{q\}=End(I)\cap Iso(J)$, $\|J\|=\|I\|$, $Int(J)=Iso(I)\setminus\{r\}$, $Iso(J)\setminus\{q\}=Int(I)$ and $End(J)\setminus\{r\}=End(I)\setminus\{q\}$; or
\item $\{q,r\}=End(J)\cap Iso(I)$, $\|J\|=\|I\|+1$, $Int(J)=Iso(I)\setminus\{q,r\}$, $Iso(J)=Int(I)$ and $End(J)\setminus\{q,r\}=End(I)$.
\end{enumerate}

\noindent In any of these three cases, $q$ will be $\pi_1$ and $r$ will be $\pi_n$, or the opposite.
\end{lemma}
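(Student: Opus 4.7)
The lemma is a structural characterization of the complement $J := \mathcal{A}_\pi \setminus I$ that accompanies any valid $\pi$, so the natural plan is a case analysis on where the extremities $q := \pi_1$ and $r := \pi_n$ sit relative to the partition $[n] = Int(I) \sqcup End(I) \sqcup Iso(I)$. Since $J \subset \mathcal{A}_x$ and any subset of a permutation's adjacency set decomposes into strongly disjoint maximal runs, $J$ is automatically a segment set contained in $x$. A point in $Int(I)$ has $I$-degree $2$ and therefore $\pi$-degree at least $2$, so it cannot be an extremity of $\pi$; this leaves exactly three cases for $(q,r)$, namely $\{q,r\}\subset End(I)$, exactly one of $q,r$ in $End(I)$ and the other in $Iso(I)$, or $\{q,r\}\subset Iso(I)$, corresponding to (i), (ii), (iii) respectively.

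For the forward direction, in each case I would classify every vertex $v \in [n]$ according to its type in $J$ using the bookkeeping identity
\[
\deg_J(v) = \deg_\pi(v) - \deg_I(v),
\]
where $\deg_I(v) \in \{0,1,2\}$ is determined by $v$'s type in $I$, and $\deg_\pi(v) = 1$ iff $v \in \{q,r\}$. For instance in case (i), an $Int(I)$-point falls into $Iso(J)$, an $End(I)\setminus\{q,r\}$-point into $End(J)$, an $Iso(I)$-point into $Int(J)$, and $q, r$ themselves into $Iso(J)$; these four assignments together are exactly the identity set in (i). The relation $\|J\| = \|I\| + \varepsilon$ with $\varepsilon \in \{-1,0,1\}$ across the three cases then follows immediately from the endpoint count $|End(J)| = 2\|J\|$, and the explicit characterizations $\{q,r\} = End(I)\cap Iso(J)$, $\{q\} = End(I)\cap Iso(J)$ and $\{r\} = End(J)\cap Iso(I)$, or $\{q,r\} = End(J)\cap Iso(I)$ are read off directly from the type tables.

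For the converse, given $q, r, J$ satisfying one of (i)--(iii), I would construct $\pi$ by analysing the graph $G$ on $[n]$ with edge set $I \cup J$. Reversing the degree calculation verifies that $G$ has $n-1$ edges, maximum degree $2$, and exactly $\{q,r\}$ as its degree-$1$ vertices, so $G$ decomposes into a $q$-to-$r$ Hamiltonian path possibly augmented by disjoint cycles; once those cycles are excluded, the path is read off as $\pi$ and the final sentence of the lemma is automatic. I expect the main obstacle to be ruling out the cyclic components, since the degree-type identities alone do not visibly forbid a closed alternating chain of $I$- and $J$-segments. I would attack this either by chasing a hypothetical cycle around, using the fact that $J$ inherits a linear order from $J \subset \mathcal{A}_x$ to derive a contradiction with the refined endpoint bookkeeping of the active case, or alternatively by performing a local swap of a pair of $J$-adjacencies within $\mathcal{A}_x$ that breaks each cycle while preserving the case conditions, thereby producing a $J'$ for which $I \cup J'$ is an honest Hamiltonian path.
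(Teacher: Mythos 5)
Your forward direction is sound and is essentially the paper's necessity argument in different clothing: the paper reads off the three cases from the way segments of $I$ and of $J=\mathcal{A}_\pi\setminus I$ alternate along $\pi$, while you obtain the same type tables from the bookkeeping $\deg_J(v)=\deg_\pi(v)-\deg_I(v)$; the two are equivalent. The real issue is the converse, and your suspicion about cycles is exactly right: conditions (i)--(iii) do \emph{not} exclude them. Concretely, for $n=8$ let $I=\{[1,2],[3,4],[5,6]\}$ and $J=\{[3,7,4],[5,8,6]\}$ with $q=1$, $r=2$. Then $End(I)=\{1,\dots,6\}$, $Iso(I)=\{7,8\}$, $Int(I)=\emptyset$, $End(J)=\{3,4,5,6\}$, $Int(J)=\{7,8\}$, $Iso(J)=\{1,2\}$, so every requirement of case (i) holds, yet $I\cup J$ is the single edge $\{1,2\}$ together with the two triangles $3\!-\!4\!-\!7$ and $5\!-\!6\!-\!8$. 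The paper's own sufficiency proof asserts at precisely this point that the displayed set identities force $I\cup J$ to be one segment with extremities $q,r$; this example contradicts that assertion, so your refusal to take the step for granted was justified.

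However, neither of your two proposed repairs can close the gap, because the ``if'' direction fails as stated, not merely for the particular $J$: take $x=3\ 7\ 4\ 5\ 8\ 6\ 1\ 2$, which contains the $J$ above, so the triple $(q,r,J)=(1,2,J)$ satisfies (i). Suppose $\pi$ contained $I$ with $\mathcal{A}_\pi\setminus I\subset\mathcal{A}_x$. The only possible $\pi$-neighbours of $7$ are $3$ and $4$, and using both would put the triangle $\{3,4\},\{3,7\},\{4,7\}$ inside $\mathcal{A}_\pi$, so $7$ must be an extremity of $\pi$; the same argument makes $8$ an extremity; and $2$ can only be adjacent to $1$ (in $I$ or in $\mathcal{A}_x$), so $2$ is an extremity as well. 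Three extremities is impossible, so no such $\pi$ exists: the cyclic configuration genuinely occurs and cannot be argued away by chasing a contradiction, nor repaired by swapping adjacencies inside $\mathcal{A}_x$. What is actually provable --- and all that the paper uses later to bound $|X_n(I)|$ --- is your forward implication: if $\pi$ exists, then $J:=\mathcal{A}_\pi\setminus I$ satisfies one of (i)--(iii) with $\{q,r\}=\{\pi_1,\pi_n\}$. A correct converse needs a stronger hypothesis on $J$, for instance that $I\cup J$ is itself the adjacency set of a permutation (i.e., that the alternating $I$/$J$ chain forms a single path), which is automatic for the $J$ produced in the necessity direction but not a consequence of the point-type identities alone.
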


\begin{proof}
To prove necessity, let $\pi$ be a permutation in $S_n$ containing $I$ such that $\A_{\pi}\setminus I \subset \A_x$, and define $J:=\A_{\pi}\setminus I$. Then $I$ and $J$ are disjoint and they complete each other in an alternating way, that is for any pair of neighbour segments $s_1,s_2\hat{\in}I$ with respect to $\pi$, there exists exactly one segment of $J$ that connects $s_1$ and $s_2$, and similarly, for any pair of neighbour segments $s_1',s_2'\hat{\in} J$ with respect to $\pi$, there exists exactly one segment of $I$ that connects $s_1'$ and $s_2'$. Therefore, we have $|\|I\|-\|J\||\leq 1$, and also, all intrinsic points of $I$ must be isolated points of $J$, $Int(I)\subseteq Iso(J)$, as well as all intrinsic points of $J$ must be isolated points of $I$, $Int(J)\subseteq Iso(I)$. Furthermore, all end points of $I$, except at most two of them, must be end points of $J$, and similarly, all end points of $J$, except at most two of them must be end points of $I$. Indeed, when we remove the intersection of end points of $I$  and end points of $J$ from the end points of the union, two points remain. In other words, there exists two points $q,r\in [n]$ such that
\[
End(I \cup J)\setminus (End(I)\cap End(J))=\{q,r\}.
\]
These two points can either be both end points of $I$, or both end points of $J$, or one of them an end point of $I$ and the other an end point of $J$ according to the following cases. 
\begin{itemize}
\item[(i)] If $\|J\|=\|I\|-1$, then $\{\pi_1,\pi_2\}$ and $\{\pi_{n-1},\pi_n\}$ are adjacencies of $id$, and so $q:=\pi_1$ and $r:=\pi_n$ are end points of $I$ while both are isolated points of $J$. Therefore, we have $Int(J)=Iso(I)$, $Iso(J)\setminus\{q,r\}=Int(I)$ and $End(J)=End(I)\setminus\{q,r\}$. 

\item[(ii)] If $\|I\|=\|J\|$, then either $\{\pi_1,\pi_2\}$ is an adjacency of $id$ and $\{\pi_{n-1},\pi_n\}$ is an adjacency of $x$ or vice versa, $\{\pi_1,\pi_2\}$ is an adjacency of $x$ and $\{\pi_{n-1},\pi_n\}$ is an adjacency of $id$. Without loss of generality suppose $\{\pi_1,\pi_2\}$ is an adjacency of $id$ and $\{\pi_{n-1},\pi_n\}$ is an adjacency of $x$. Then $q:=\pi_1$ is an end point of $I$ and also an isolated point of $J$, while $r:=\pi_n$ is an end point of $J$ and also an isolated point of $id$ with respect to $I$, and we have $Int(J)=Iso(I)\setminus\{r\}$, $Iso(J)\setminus\{q\}=Int(I)$ and $End(J)\setminus\{r\}=End(I)\setminus\{q\}$. 

\item[(iii)] Finally, if $\|J\|=\|I\|+1$, then $\{\pi_1,\pi_2\}$ and $\{\pi_{n-1},\pi_n\}$ are adjacencies of $x$. Therefore, $q:=\pi_1$ and $r:=\pi_n$ are end points of $J$ and also isolated points of $I$. Furthermore, $Int(J)=Iso(I)\setminus\{q,r\}$, $Iso(J)=Int(I)$ and $End(J)\setminus\{q,r\}=End(I)$.
\end{itemize}
To prove sufficiency, let $q,r\in [n]$ and $J$ be a segment set contained in $x$ satisfying condition $(i)$ in the statement of the lemma (the proof is similar, for $q,r,$ and $J$ satisfying conditions $(ii)$ and $(iii)$). Then
\[
Int(I)\cup End(I) \cup Int(J)=Int(I)\cup End(I) \cup Iso(I)=[n],
\]
and
\[
Int(I)\cap Int(J)=Int(I)\cap Iso(J)=\emptyset .
\]
In fact, this shows that $I$ and $J$ complete each other in an alternating way, and $I\cup J$ is a unique segment with extremities $q$ and $r$, i.e. $End(I\cup J)=\{q,r\}$, and with intrinsic points $Int(I\cup J)=[n]\setminus \{q,r\}$. In other words, there exists a permutation $\pi$ such that $\mathcal{A}_\pi=I\cup J$. As $I$ and $J$ are disjoint, one can write $J=\mathcal{A}_\pi\setminus I\subset \mathcal{A}_x$. This finishes the proof.

\end{proof}

Let $I$ be a segment set in $\mathcal{I}^{(n)}$, and let $x\in X_n(I)$. From Lemma \ref{complementary}, $x$ contains a segment set $J$ satisfying one of the three conditions indicated in the statement of Lemma~\ref{complementary}.



\begin{remark}
Let $I$ be a segment set of $id$ and $\pi$ a permutation containing $I$. In order to construct a permutation $x$ such that $\bar{I}_\pi=\mathcal{A}_\pi\setminus I \subset \mathcal{A}_x$, we should take different rearrangements of segments of $\overline{I}_{\pi}$ (considering two directions) and intrinsic points of $I$. Each such rearrangement gives us a permutation $x\in X_n(I)$. 
\end{remark}

In Theorem~\ref{thm:permnumb}, we give an explicit formula for the number of permutations in $X_n(I)$ as a function of the number of adjacencies and segments in $I$. To this end, we need the following lemma.




\begin{lemma}\label{permutation}
Given a segment set $I$ with $m$ adjacencies and $k$ segments, that is $I\in \mathcal{I}_{m,k}^{(n)}$, the number of permutations in $S_n$ containing $I$ is equal to  $2^{k}(n-m)!$.
\end{lemma}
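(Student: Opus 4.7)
The plan is to think of the segments of $I$ as indivisible blocks that can each be ``read'' in one of two directions, and then count linear arrangements of these blocks together with the numbers not appearing in $I$.

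First I would count how many elements of $[n]$ are ``used'' by $I$. Since the $k$ segments of $I$ are pairwise strongly disjoint, a segment with $\ell$ adjacencies uses exactly $\ell+1$ numbers, so the total number of points appearing in $I$ (as end points or intrinsic points) is
\[
\sum_{s\hat{\in}I}(|s|+1)=m+k,
\]
and the number of isolated points of $I$ is $n-m-k$.

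Next I would describe the bijection. To build a permutation $\pi\in S_n$ with $I\subset\A_\pi$, the adjacencies of each segment $s\hat{\in}I$ must appear consecutively and in the prescribed cyclic order inside $\pi$, but the segment can be written from left to right in exactly two ways (either $[n_0,n_1,\ldots,n_\ell]$ or the reverse $[n_\ell,\ldots,n_1,n_0]$). So $\pi$ is uniquely determined by (i) a choice of orientation for each of the $k$ segments, giving $2^k$ possibilities, and (ii) a linear ordering of the $k$ ``super-symbols'' (one per oriented segment) together with the $n-m-k$ isolated points of $I$, i.e.\ a permutation of $k+(n-m-k)=n-m$ objects, giving $(n-m)!$ possibilities. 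These choices are independent and any two distinct choices produce distinct permutations of $[n]$ containing $I$ as a subset of adjacencies, so the count is $2^k(n-m)!$.

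There is essentially no hard step, but the one place to be careful is verifying that every permutation $\pi$ containing $I$ arises from \emph{exactly} one such choice; this is where one uses that the segments are strongly disjoint (no shared numbers), so in $\pi$ each segment of $I$ appears as a single maximal contiguous block with an unambiguous orientation and position, and hence the block/isolated-point decomposition of $\pi$ is unique. Once this uniqueness is stated, the multiplication principle yields the formula.
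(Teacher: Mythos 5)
Your proof is correct and follows essentially the same argument as the paper: treat each of the $k$ segments as an orientable block (factor $2^k$) and arrange these blocks together with the $n-m-k$ isolated points, giving $(k+(n-m-k))!=(n-m)!$ orderings. Your additional remarks on uniqueness of the block decomposition only make explicit what the paper's shorter proof leaves implicit.
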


\begin{proof}
As the segment set $I$ has $m$ adjacencies and $k$ segments, each permutation containing $I$ has $n-m-k$ isolated points with respect to $I$. Therefore, noting that segments have two directions, we have $2^{k}(k+(n-m-k))!$ permutations containing $I$.
\end{proof}


\begin{theorem}\label{thm:permnumb}
Given a segment set $I$ with $m$ adjacencies and $k$ segments, that is $I\in \mathcal{I}_{m,k}^{(n)}$, we have:
\begin{multline}
|X_n(I)|= \frac{2^k(m+1)!(n-m-2)!}{k!}\\
\times\left(k^2(k-1)+2k(n-m-k)+\frac{(n-m-k)(n-m-k-1)}{k+1}\right)
\end{multline}

\end{theorem}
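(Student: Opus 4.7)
The plan is to implement the strategy sketched just before the theorem statement: partition $X_n(I)$ according to which segment set $J$ witnesses membership, and invoke Lemma~\ref{complementary} to handle the three possible cases $\|J\| \in \{k-1,k,k+1\}$. Writing
\[
|X_n(I)| \;=\; \sum_{J \in \mathbf{C}_n(I)} |\mathcal{R}_n(J)|,
\]
which is exactly the partition (disjointness) assertion made in the preamble, and noting that $|I|+|J|=n-1$ together with $I\cap J=\emptyset$ force $|J|=n-1-m$, Lemma~\ref{permutation} gives $|\mathcal{R}_n(J)|=2^{\|J\|}(m+1)!$. Since this depends only on $\|J\|$, everything reduces to counting the valid $J$'s in each of the three cases.

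In each case, I would count the permutations $\pi$ with $\A_\pi=I\cup J$ and then divide by $2$ to account for $\pi\mapsto \pi^R$. Any such $\pi$ is an arrangement of the $k$ oriented $I$-super-letters together with the $n-m-k$ isolated points of $I$, and the three cases of Lemma~\ref{complementary} correspond exactly to whether both, one, or neither of $\pi_1,\pi_n$ is an endpoint of an $I$-segment. A direct ``first/last token'' count then yields the three $\pi$-counts
\[
k(k-1)\cdot 2^k(n-m-2)!,\ \ 2k(n-m-k)\cdot 2^k(n-m-2)!,\ \ (n-m-k)(n-m-k-1)\cdot 2^k(n-m-2)!,
\]
in cases (i), (ii), (iii) respectively, whose sum equals $2^k(n-m)!$, consistent with Lemma~\ref{permutation}.

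Halving each count to get the number of valid $J$'s, multiplying by the corresponding $|\mathcal{R}_n(J)|\in\{2^{k-1},2^k,2^{k+1}\}\cdot(m+1)!$, and summing yields $|X_n(I)|$; the final algebraic step is to factor out $\frac{2^k(m+1)!(n-m-2)!}{k!}$ and simplify the three remaining coefficients into $k^2(k-1)$, $2k(n-m-k)$ and $(n-m-k)(n-m-k-1)/(k+1)$ respectively. The main obstacle I foresee is the disjointness part of the partition claim: for the displayed sum to equal $|X_n(I)|$ rather than merely to bound it above, each $x\in X_n(I)$ must determine a unique $J\in\mathbf{C}_n(I)$ with $J\subset\A_x$. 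A secondary delicate point is the bookkeeping of the factors of~$2$ (orientations of each $I$-segment versus the global reversal $\pi\mapsto\pi^R$) and of the ``distribution of isolated points into gaps,'' which must combine to produce exactly the denominators $k!$ and $k+1$ that appear in the stated formula.
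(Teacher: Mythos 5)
You follow the same architecture as the paper (the three cases of Lemma~\ref{complementary}, then Lemma~\ref{permutation} with $|J|=n-1-m$, so $|\mathcal{R}_n(J)|=2^{\|J\|}(m+1)!$), and your counts of the permutations $\pi$ containing $I$ with both, one, or neither extremity an end point of $I$ are correct (they do sum to $2^k(n-m)!$). The proposal breaks at the next step, and the ``final algebraic step'' you defer does not exist. With your rule (number of admissible $J$ in a case equals half the number of such $\pi$, which is the normalization forced by the fact that $\mathcal{A}_\pi=I\cup J$ determines $\pi$ only up to reversal), the assembled sum is $(m+1)!\,(n-m-2)!\,\bigl(2^{2k-2}k(k-1)+2^{2k}k(n-m-k)+2^{2k}(n-m-k)(n-m-k-1)\bigr)$, whose terms differ from those of the stated right-hand side by factors of order $2^{k}k!$; no factoring turns your coefficients into $k^2(k-1)$, $2k(n-m-k)$ and $(n-m-k)(n-m-k-1)/(k+1)$ with prefactor $2^k/k!$. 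The paper reaches the displayed formula because at precisely this point it divides its raw count (your $\pi$-count, which it obtains as $2^kk!\,(n-m-2)!/(k-2)!$ and the analogues) not by $2$ but by $\|J\|!\,2^{\|J\|}$, i.e.\ by $(k-1)!\,2^{k-1}$, $k!\,2^{k}$, $(k+1)!\,2^{k+1}$ in the three cases, treating the order and orientations of the segments of $J$ as overcounting. So you and the paper disagree on how many admissible $J$ there are per case, and your route cannot produce the stated identity; you should adjudicate the discrepancy by listing the admissible $J$ in a small instance by hand (for example $n=7$, $I=\{[1,2],[3,4],[5,6]\}$, $\|J\|=2$) before deciding how to proceed.

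Your second worry, disjointness, is also exactly where the paper is thin: the preamble asserts without proof that distinct admissible $J$ give disjoint sets $\mathcal{R}_n(J)$, and this fails in general. For $n=5$ and $I=\{[1,2],[3,4]\}$, both $J_1=[1,5,3]$ and $J_2=\{\{3,5\},\{2,4\}\}$ are admissible ($I\cup J_1$ and $I\cup J_2$ are the adjacency sets of $2\,1\,5\,3\,4$ and $1\,2\,4\,3\,5$ respectively), yet both are contained in $x=2\,4\,1\,5\,3$, so $\mathcal{R}_5(J_1)\cap\mathcal{R}_5(J_2)\neq\emptyset$. Consequently the identity $|X_n(I)|=\sum_{J\in\mathbf{C}_n(I)}|\mathcal{R}_n(J)|$ you start from is, as stated, only an upper bound, and turning either your computation or the paper's into a proof of an exact equality would require an inclusion--exclusion argument or a canonical choice of $J$ for each $x$, which neither your proposal nor the paper's proof supplies.
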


\begin{proof}
Note that since the segment set $I$ has $m$ adjacencies and $\|I\|=k$, then $|Int(I)|=m-k$, $|Iso(I)|=n-m-k$ and $|End(I)|=2k$. By definition, $x\in X_n(I)$ if there exist a segment set $J$ that satisfies one of the three conditions in Lemma~\ref{complementary}. We divide the proof into three cases. We shall count the number of ways we can construct $J$ for each one of the three cases
, and thus, we use Lemma~\ref{permutation} to compute the number of permutations $x\in X_n(I)$ containing $J$ in each case. 

If $\|J\|=k-1$, then to have a permutation $\pi$ such that $\mathcal{A}_\pi$ is a sequence of alternating segments from $I$ and $J$, the number of ways that we can choose pairs of end points to construct segments of $J$ is equal to the number of ways we can rearrange the segments of $I$, noting that each segment can be placed in two different directions and $End(J)\subset End(I)$. Hence, we have $2^{k}k!$ ways to choose pairs of end points for $J$. On the other hand, when the end points of segments of $J$ are fixed, as $Int(J)=Iso(I)$, the number of ways that one can distribute (with order) $n-m-k$ intrinsic points in $k-1$ segments of $J$  is $\frac{((n-m-k)+(k-2))!}{(k-2)!}=\frac{(n-m-2)!}{(k-2)!}$. Ignoring the direction and order of segments in this calculation, we have 
\[
2^kk!\frac{(n-m-2)!}{(k-2)!(k-1)!2^{k-1}}=2k\frac{(n-m-2)!}{(k-2)!}
\] 
ways to construct segment set $J$. Remember that each of these possible segment sets $J$ has exactly $k-1$ segments and $n-m-1$ adjacencies and therefore, applying Lemma~\ref{permutation}, there exist 
\[
2k\frac{(n-m-2)!}{(k-2)!}2^{k-1}(n-(n-m-1))!=\frac{2^k k (n-m-2)!(m+1)!}{(k-2)!}
\]
permutations $x\in X_n(I)$ containing $J$ that satisfies the case $(i)$ of Lemma~\ref{complementary}.

Similarly, if $\|J\|=k$, the number of ways that we can choose pairs of end points for segments of $J$ is equal to the number of ways we can arrange the segments of $I$, noting that each segment can be in two directions and in this case one of the end points of $J$ must be chosen from $Iso(I)$ since $End(J)\setminus\{r\}=End(I)\setminus\{q\}$ where $r$ is an end point of $J$ and an isolated point in $id$ with respect to $I$, and $q$ is an end point of $I$ and an isolated point in $x$ with respect to $J$. Therefore, we have $2(n-m-k) 2^{k}k!$ ways to choose pairs of end points in order to construct $J$. Whereas, $|Int(J)|=|Iso(I)|-1$, we have 
\[
2(n-m-k)2^{k}k!\frac{((n-m-k-1)+k-1)!}{(k-1)!2^kk!}=\frac{2(n-m-k)(n-m-2)!}{(k-1)!}
\]
ways to construct segment set $J$. Thus there exist 
\[
2^{k}(m+1)!\frac{2(n-m-k)(n-m-2)!}{(k-1)!}=\frac{2^{k+1}(n-m-k)(m+1)!(n-m-2)!}{(k-1)!}
\]
permutations $x\in X_n(I)$ containing $J$ that satisfies case $(ii)$ of Lemma~\ref{complementary}.

Lastly, if $\|J\|=k+1$ then $|Int(J)|=n-m-k-2$, $|Iso(J)|=m-k$ and $End(J)\setminus\{q,r\}=End(I)$ where $q$ and $r$ are end points of $J$ and isolated points of $I$. Therefore, similarly, there exist 

\begin{multline}
2^kk!(n-m-k)(n-m-k-1)\frac{(n-m-2)!}{k!(k+1)!2^{k+1}}2^{k+1}(m+1)!=\\
\frac{2^k(n-m-k)(n-m-k-1)(n-m-2)!(m+1)!}{(k+1)!}
\end{multline}

permutations $x\in X_n(I)$ containing $J$ that satisfies case $(iii)$ of Lemma~\ref{complementary}.
\end{proof}
\begin{remark}[Random segment set]
Applying Proposition~\ref{segmentset}, the probability of existence of a permutation $\pi\in \overline{[id,\xi^{(n)}]}$ containing random segment set $I_m^{(n)}$ such that $\A_{\pi}\setminus I_m^{(n)} \subset \A_{\xi^{(n)}}$, is bounded by 
\begin{multline*}
\p(\xi^{(n)}\in X_n(I_m^{(n)}))=\sum\limits_{k=1}^m \frac{{m-1 \choose k-1}{n-m \choose k} 2^k(m+1)!(n-m-2)!}{{n-1 \choose m} k!n!}\\
\times\left(k^2(k-1)+2k(n-m-k)+\frac{(n-m-k)(n-m-k-1)}{k+1}\right).
\end{multline*}
\qed
\end{remark}

For $0< \varepsilon<1/2$, let 
\[
\Lambda_n^\varepsilon:=\bigcup\limits_{m\leq n-1} \bigcup\limits_{k\geq \varepsilon n} \mathcal{I}_{m,k}^{(n)}.
\]
Note that the condition $k\geq l$, for convenient $l\in [n]$, implies that $l\leq m\leq n-l$, since $k\leq m$ and also in order that a segment set $I$ contained in a permutation $x$ has at least $l$ segments, at least $l-1$ adjacencies of $x$ should not appear in $I$. The following theorem is the consequence of Theorem \ref{permutation}.
\begin{theorem}\label{thm:main}
Let $0<\varepsilon<1/2$ and let $(I_n)_{n\in \N}$ be a sequence of segment sets such that $I_n\in \mathcal{I}^{(n)}$ and $\varepsilon n\leq |I_n| \leq (1-\varepsilon) n$. Then
\[
\frac{|X_n(I_n)|}{n!} \rightarrow 0,
\]
as $n\rightarrow \infty$. Furthermore,
\[
\p(\xi^{(n)}\in \bigcup\limits_{I\in \Lambda_n^\varepsilon} X_n(I))\rightarrow 0,
\]
as $n\rightarrow \infty$.
\end{theorem}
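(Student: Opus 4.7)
The plan is to apply the explicit formula
\[
|X_n(I)|=\frac{2^k(m+1)!(n-m-2)!}{k!}\,Q(n,m,k)
\]
from Theorem~\ref{thm:permnumb}, where $m=|I|$, $k=\|I\|$, and $Q(n,m,k)=k^2(k-1)+2k(n-m-k)+(n-m-k)(n-m-k-1)/(k+1)\leq C_1 n^3$, and to carry out two estimates. Throughout, $H_2$ denotes the binary entropy function.

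For the first assertion, writing $m=|I_n|$ and $k=\|I_n\|$,
\[
\frac{|X_n(I_n)|}{n!}=\frac{2^k}{k!}\cdot\frac{(m+1)!(n-m-2)!}{n!}\cdot Q,
\]
and I would use three independent bounds. The convergent series $\sum_{j\geq 0}2^j/j!=e^2$ gives $2^k/k!\leq e^2$. The explicit expression gives $Q\leq C_1 n^3$. Finally, the identity
\[
\frac{(m+1)!(n-m-2)!}{n!}=\frac{1}{(n-m-1)\binom{n}{m+1}},
\]
combined with the unimodality of binomial coefficients and Stirling's formula, yields for $m\in[\varepsilon n,(1-\varepsilon)n]$ the lower bound $\binom{n}{m+1}\geq C_2\,2^{nH_2(\varepsilon)}/\sqrt{n}$, where $H_2(\varepsilon)>0$. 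Combining the three estimates gives $|X_n(I_n)|/n!=O\bigl(n^{5/2}\,2^{-nH_2(\varepsilon)}\bigr)\to 0$.

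For the second assertion, the plan is a union bound. By Proposition~\ref{segmentset} applied to $x=id^{(n)}$, the number of segment sets of $id^{(n)}$ in $\mathcal{I}_{m,k}^{(n)}$ is $\binom{m-1}{k-1}\binom{n-m}{k}$, and since $|X_n(I)|$ depends on $I$ only through the parameters $(m,k)$,
\[
\p\Bigl(\xi^{(n)}\in\bigcup_{I\in\Lambda_n^\varepsilon}X_n(I)\Bigr)\leq\sum_{m}\sum_{k\geq\varepsilon n}\binom{m-1}{k-1}\binom{n-m}{k}\,\frac{2^k(m+1)!(n-m-2)!\,Q}{k!\,n!}.
\]
The crucial new ingredient is the super-polynomial decay of $2^k/k!$ on the range $\{k\geq\varepsilon n\}$: Stirling's bound $k!\geq(k/e)^k$ gives $2^k/k!\leq(2e/k)^k$, and once $\varepsilon n\geq 2e$ this expression is decreasing in $k$, so
\[
\frac{2^k}{k!}\leq\Bigl(\frac{2e}{\varepsilon n}\Bigr)^{\varepsilon n}\qquad\text{for every }k\geq\varepsilon n,
\]
and this bound decays faster than any polynomial in $n$ since its logarithm equals $-\varepsilon n\log n+O(n)$. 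Pulling this factor out of the sum, bounding $Q\leq C_1 n^3$, applying Vandermonde's identity $\sum_k\binom{m-1}{k-1}\binom{n-m}{k}=\binom{n-1}{m}$, and using the elementary identity $\binom{n-1}{m}/\binom{n}{m+1}=(m+1)/n$ reduces the inner sum to $(m+1)/(n(n-m-1))$, whose sum over $m\in[\varepsilon n,(1-\varepsilon)n]$ is $O(1)$ (as a constant depending on $\varepsilon$). The final upper bound becomes $O(n^3)\cdot(2e/(\varepsilon n))^{\varepsilon n}\to 0$.

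The hard part is pairing the combinatorial growth of $\binom{m-1}{k-1}\binom{n-m}{k}$, which is exponential in $n$ in general, against the super-polynomial decay of $2^k/k!$ in exactly the right way: Vandermonde's identity absorbs the binomials into $\binom{n-1}{m}$, which is then essentially cancelled by the factorial ratio via $\binom{n-1}{m}/\binom{n}{m+1}=(m+1)/n$, leaving only polynomial growth for the super-polynomial decay to annihilate. This is precisely why the hypothesis $k\geq\varepsilon n$ on the number of segments (and not merely on the number of adjacencies) is essential for the second assertion; the first assertion, by contrast, relies solely on the entropy-type decay of the factorial ratio $(m+1)!(n-m-2)!/n!$.
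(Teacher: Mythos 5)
Your proposal is correct, and for the first assertion it is essentially the paper's own argument: both reduce $|X_n(I_n)|/n!$ to the factorial ratio $(m+1)!(n-m-2)!/n!$ (up to polynomial factors and the bounded factor $2^k/k!\le e^2$) and kill it by Stirling through the entropy-type bound $\varepsilon^\varepsilon(1-\varepsilon)^{1-\varepsilon}<1$. For the second assertion your bookkeeping genuinely differs: the paper bounds the number of admissible segment sets crudely by $|\Lambda_n^\varepsilon|\le 2^{n-1}$ and uses a worst-case-in-$(m,k)$ form of the formula of Theorem~\ref{thm:permnumb}, whereas you count each class exactly via Proposition~\ref{segmentset}, collapse the $k$-sum by Vandermonde, and cancel $\binom{n-1}{m}$ against the factorial ratio to leave only $(m+1)/(n(n-m-1))$. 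The decisive mechanism is nevertheless the same in both proofs: the hypothesis $k\ge\varepsilon n$ makes the $1/k!$ factor decay like $n^{-\varepsilon n}$, which annihilates every exponential-in-$n$ contribution; your version buys a cleaner and tighter bound, $O(n^3)\,(2e/(\varepsilon n))^{\varepsilon n}$, and avoids the paper's somewhat delicate endpoint estimate of $(m+1)!(n-m-2)!$ over the range of $m$. One caveat, which applies to the paper equally: your use of Proposition~\ref{segmentset} with $x=id^{(n)}$ implicitly reads $\Lambda_n^\varepsilon$ as the family of segment sets contained in $\mathcal{A}_{id^{(n)}}$ with at least $\varepsilon n$ segments; this is the intended reading (it is what Theorem~\ref{thm:main-main} needs, and the paper's own bound $|\Lambda_n^\varepsilon|\le 2^{n-1}$ presumes it), and it is in fact necessary, since under the literal definition every $\xi$ belongs to $X_n(I)$ for a suitable $I\subset\mathcal{A}_\xi$ with at least $\varepsilon n$ segments, so the probability would not tend to $0$.
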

\begin{proof}
By assumption, for every $n\in \N$, there exists $c_n$ such that $\varepsilon\leq c_n  \leq 1-\varepsilon$ and $|I_n|=nc_n+o(n)$. Then, by Lemma \ref{permutation} and Stirling's formula, there exists a constant $c_0$ such that
\[
\begin{array}{l}
\lim\limits_{n\rightarrow \infty} \frac{|X_n(I_n)|}{n!}\\
\leq c_0 \lim\limits_{n\rightarrow \infty} \frac{ (\frac{c_n n}{e})^{c_n n+o(n)}(\frac{(1-c_n)n}{e})^{(1-c_n)n-o(n)}}{(\frac{n}{e})^n}(n^{\frac{7}{2}}+o(n^{\frac{7}{2}}))\\
\leq c_0 \lim\limits_{n\rightarrow \infty} (\varepsilon^\varepsilon (1-\varepsilon)^{1-\varepsilon})^{n+o(n)} (n^{\frac{7}{2}}+o(n^{\frac{7}{2}}))=0,
\end{array}
\]
where the last inequality holds as the maximum of the function $f(x)=x^x(1-x)^{1-x}$ in the domain $[\varepsilon,1-\varepsilon]$ is $\varepsilon^\varepsilon (1-\varepsilon)^{1-\varepsilon}$.

For the second part, recall that if $I\in \Lambda_n^\varepsilon$, then $\|I\|\geq \varepsilon n$, and hence, $\varepsilon n\leq |I|\leq (1-\varepsilon)n$. For any $I\in \Lambda_n^\varepsilon$, from Theorem \ref{thm:permnumb}, we have
\[
\frac{|X_n(I)|}{n!} \leq \frac{2^{\lfloor (1-\varepsilon)n\rfloor +1}\lfloor \varepsilon n\rfloor ! (\lfloor (1-\varepsilon)n\rfloor+1)!}{\lfloor \varepsilon n\rfloor ! n!}(n^3+o(n^3)).
\]

 Therefore, $|\Lambda_n^\varepsilon|\leq 2^{n-1}$ and Stirling's formula imply 
\[
\begin{array}{l}
\lim\limits_{n\rightarrow \infty} \p(\xi^{(n)}\in \bigcup\limits_{I\in \Lambda_n^\varepsilon} X_n(I))\\
\leq \lim\limits_{n\rightarrow \infty} \frac{2^n (2e)^{(1-\varepsilon)n+o(n)} (\varepsilon^\varepsilon (1-\varepsilon)^{1-\varepsilon})^n}{(\varepsilon n)^{\varepsilon n}}(n^3+o(n^3))=0.\\

\end{array}
\]
\end{proof}
Now we prove the main theorem of this section, namely, we prove, in part, a conjecture stated in Haghighi et. al. \cite{haghighi12}. For $\varepsilon>0$, set
\[
\mathcal{D}_n^\varepsilon:=\{x\in S_n : \exists \pi \in \overline{[id,x]} \ s.t. \ \ d^{(n)}(\pi,id),d^{(n)}(\pi,x)\geq \varepsilon n\}.
\]
Also, for $a\in \R$, define
\[
\Delta_n^a:=\{x\in S_n : |\mathcal{A}_{id,x}|\leq a\}.
\]
\begin{theorem}\label{thm:main-main}
For any $\varepsilon >0$,
\[
\p(\xi^{(n)}\in \mathcal{D}_n^\varepsilon)\rightarrow 0,
\]
as $n\rightarrow 0$.
\end{theorem}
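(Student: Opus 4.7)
The plan is to reduce the bound on $\p(\xi^{(n)}\in\mathcal{D}_n^\varepsilon)$ to a sum of quantities of the form $|X_n(I)|/n!$ over an explicit family of segment sets $I$ of $id^{(n)}$, and then to control this sum by combining Theorem \ref{thm:main} part 2 (for $I$ with many components) with an entropy estimate (for $I$ with few components).

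The first step is to establish the inclusion $\mathcal{D}_n^\varepsilon\subseteq\bigcup_{I\in\mathcal{S}_n}X_n(I)$, where $\mathcal{S}_n$ is the collection of segment sets of $id^{(n)}$ satisfying $\varepsilon n\leq|I|\leq n-1-\varepsilon n$. Indeed, for $x\in\mathcal{D}_n^\varepsilon$, pick a witnessing $\pi\in\overline{[id,x]}$ with $d^{(n)}(\pi,id),\,d^{(n)}(\pi,x)\geq\varepsilon n$ and set $I:=\A_\pi\cap\A_{id}$, $J:=\A_\pi\setminus I$. Lemma~2 of \cite{jam14} gives $\A_\pi\subseteq\A_{id}\cup\A_x$, so $J\subseteq\A_x$, which shows $x\in X_n(I)$. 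The two distance conditions become $|I|=n-1-d^{(n)}(\pi,id)\leq n-1-\varepsilon n$ and, using $|J|=n-1-|I|\leq|\A_\pi\cap\A_x|=n-1-d^{(n)}(\pi,x)$, also $|I|\geq\varepsilon n$. A union bound then gives
\[
\p(\xi^{(n)}\in\mathcal{D}_n^\varepsilon)\leq\sum_{I\in\mathcal{S}_n}\frac{|X_n(I)|}{n!}.
\]
One may also assume $\varepsilon<1/2$, since otherwise $d^{(n)}(\pi,id)+d^{(n)}(\pi,x)=d^{(n)}(id,x)\leq n-1<2\varepsilon n$ forces $\mathcal{D}_n^\varepsilon=\emptyset$ for large $n$.

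Next, fix $\varepsilon'\in(0,\varepsilon/2)$ and split $\mathcal{S}_n=\mathcal{S}_n^{\geq}\sqcup\mathcal{S}_n^{<}$ by whether $\|I\|\geq\varepsilon' n$ or $\|I\|<\varepsilon' n$. Since $\mathcal{S}_n^{\geq}\subseteq\Lambda_n^{\varepsilon'}$, the partial sum over $\mathcal{S}_n^{\geq}$ tends to $0$ by Theorem \ref{thm:main} part 2 applied with $\varepsilon'$ in place of $\varepsilon$. For the partial sum over $\mathcal{S}_n^{<}$, Theorem \ref{thm:permnumb} combined with Stirling's formula yields the uniform pointwise bound $|X_n(I)|/n!\leq C\,\mathrm{poly}(n)\,\rho^n$ for every $I\in\mathcal{S}_n$, with $\rho:=\varepsilon^\varepsilon(1-\varepsilon)^{1-\varepsilon}=\max_{c\in[\varepsilon,1-\varepsilon]}c^c(1-c)^{1-c}<1$. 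Proposition \ref{segmentset} together with the identity
\[
\sum_{m=k}^{n-k}\binom{m-1}{k-1}\binom{n-m}{k}=\binom{n}{2k}
\]
(obtained by designating the $k$-th smallest of a $2k$-subset of $[n]$) then gives
\[
|\mathcal{S}_n^{<}|\leq\sum_{k=1}^{\lceil\varepsilon' n\rceil}\binom{n}{2k}\leq\varepsilon' n\cdot\binom{n}{\lfloor 2\varepsilon' n\rfloor}=e^{nH(2\varepsilon')+o(n)},
\]
where $H$ is the binary entropy. Multiplying the two estimates,
\[
\sum_{I\in\mathcal{S}_n^{<}}\frac{|X_n(I)|}{n!}\leq C\,\mathrm{poly}(n)\,\bigl(\rho\,e^{H(2\varepsilon')}\bigr)^n,
\]
which vanishes provided $H(2\varepsilon')<-\log\rho=H(\varepsilon)$. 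Since $H$ is strictly increasing on $[0,1/2]$ and $2\varepsilon'<\varepsilon<1/2$, this is secured by the a priori choice $\varepsilon'<\varepsilon/2$.

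The principal obstacle is precisely the small-$\|I\|$ regime, for which Theorem \ref{thm:main} part 2 is silent and the admissible segment sets are abundant enough that a naive bound fails; what makes the argument work is the entropy calibration $\varepsilon'<\varepsilon/2$, which ensures that the exponential growth $e^{nH(2\varepsilon')}$ of the number of admissible $I$ is overwhelmed by the exponential decay $\rho^n$ of each $|X_n(I)|/n!$.
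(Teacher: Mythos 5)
Your argument is correct, and it follows the same overall reduction as the paper---pass from $\mathcal{D}_n^\varepsilon$ to membership in $X_n(I)$ for segment sets $I$ of $id$ with $|I|$ of linear size on both sides (via Lemma~2 of \cite{jam14}, exactly as you do), then apply a union bound and the count of Theorem~\ref{thm:permnumb}---but you complete the counting step differently and more explicitly than the paper. The paper's proof first intersects with $\Delta_n^{a_n}$ (a device you dispense with, and which is indeed not needed once one takes $I:=\A_\pi\cap\A_{id}$, since the two-sided bound $\varepsilon n\le |I|\le n-1-\varepsilon n$ follows directly, as you show), and then disposes of $\p\bigl(\xi^{(n)}\in\bigcup_{I}X_n(I)\bigr)$ by citing Theorems~\ref{thm:permnumb}, \ref{thm:main} and Stirling's formula, leaving implicit precisely the regime of segment sets with few components, where the second part of Theorem~\ref{thm:main} does not apply. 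Your split at $\|I\|\ge\varepsilon' n$ versus $\|I\|<\varepsilon' n$, together with Proposition~\ref{segmentset}, the identity $\sum_{m}\binom{m-1}{k-1}\binom{n-m}{k}=\binom{n}{2k}$, and the calibration $\varepsilon'<\varepsilon/2$ making the entropy of the count $e^{nH(2\varepsilon')+o(n)}$ lose to the uniform decay $\rho^n$ with $-\log\rho=H(\varepsilon)$, supplies exactly the missing estimate; the uniform bound $|X_n(I)|/n!\le C\,\mathrm{poly}(n)\,\rho^n$ is valid since $2^k/k!\le 2$ and $(m+1)!\,(n-m-2)!/n!$ is $\mathrm{poly}(n)\cdot\binom{n}{m+1}^{-1}$. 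One small repair: the second part of Theorem~\ref{thm:main} as stated bounds the probability of the union over $\Lambda_n^{\varepsilon'}$, not the union-bound sum $\sum_{I\in\mathcal{S}_n^{\ge}}|X_n(I)|/n!$ that you form after already expanding over all of $\mathcal{S}_n$; either split the event first, writing $\p(\xi^{(n)}\in\mathcal{D}_n^\varepsilon)\le\p\bigl(\xi^{(n)}\in\bigcup_{I\in\Lambda_n^{\varepsilon'}}X_n(I)\bigr)+\sum_{I\in\mathcal{S}_n^{<}}|X_n(I)|/n!$, or appeal to the proof of Theorem~\ref{thm:main} (which does bound the sum, using that there are at most $2^{n-1}$ segment sets of $id$ together with the per-term bound) rather than to its statement. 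With that rewording your proof is complete, and it is in fact a more self-contained version of the paper's argument.
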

\begin{proof}
Let $(a_n)_{n\in \N}$ be an arbitrary sequence of real numbers diverging to $\infty$ such that $a_n / n\rightarrow 0$, as $n\rightarrow \infty$. Let 
\[
\Upsilon_n^\varepsilon:=\bigcup\limits_{m\in [\frac{\varepsilon}{2}n,(1-\frac{\varepsilon}{2})n]} \mathcal{I}_m^{(n)}
\]
Then
\[
\begin{array}{l}
0\leq \lim\limits_{n \rightarrow\infty}\p(\xi^{(n)}\in \mathcal{D}_n^\varepsilon)=\lim\limits_{n \rightarrow\infty}\p(\xi^{(n)}\in \mathcal{D}_n^\varepsilon \cap \Delta_n^{a_n})\leq \\
\lim\limits_{n \rightarrow\infty}\p(\xi^{(n)}\in \Delta_n^{a_n} \cap \bigcup\limits_{I\in \Upsilon_n^\varepsilon} X_n(I))\leq\\
\lim\limits_{n\rightarrow\infty}\p(\xi^{(n)}\in \bigcup\limits_{I\in \Upsilon_n^\varepsilon} X_n(I))=0,\\
\end{array}
\]
where the last convergence holds from Theorem \ref{thm:permnumb}, Theorem \ref{thm:main}, and Stirling's formula.
\end{proof}


\begin{thebibliography}{99}


\bibitem{bryant98} D. Bryant, \emph{The complexity of the breakpoint median problem}. Centre de recherches mathematiques, (1998).


\bibitem{caprara03} A. Caprara, \emph{The reversal median problem}, INFORMS Journal on Computing,15 (2003), pp. 93--113.
 
 \bibitem{jam16} P.H. da Silva,\emph{Two Studies on the Mathematical Analysis of Evolution: fixation in star-like graphs and phylogenetic reconstruction through breakpoint medians}, PhD thesis, Fluminense Federal University, 2017.

 
\bibitem{fertincombinatorics} G. Fertin, A. Labarre, I. Rusu, E. Tannier and S. Vialette, \emph{Combinatorics of genome rearrangements}, The MIT Press, 2009.


\bibitem{haghighi12} M. Haghighi and D. Sankoff, \emph{Medians seek the corners, and other conjectures}, BMC bioinformatics, 13 (2012), p. S5.


\bibitem{jam16} A. Jamshidpey,\emph{ Population Dynamics in Random Environment, Random Walks on Symmetric Group, and Phylogeny Reconstruction}, PhD thesis, Universit{\'e} d'Ottawa/University of Ottawa, 2016.


\bibitem{jam14} A. Jamshidpey, A. Jamshidpey and D. Sankoff, \emph{Sets of medians in the non-geodesic pseudometric space of unsigned genomes with breakpoints}, BMC genomics, 15 (2014), p. S3.


\bibitem{jam13} A. Jamshidpey and D. Sankoff, \emph{Phase change for the accuracy of the median value in estimating divergence time}, BMC bioinformatics, 14 (2013), p. S7.


\bibitem{jam17} A. Jamshidpey and D. Sankoff, \emph{Asymptotic medians of random permutations sampled from reversal
random walks}, Theoretical Computer Science, (2017).

\bibitem{larlee14} C. A. Larlee, C. Zheng, and D. Sankoff, \emph{Near-medians that avoid the corners; a combinatorial probability approach}, BMC genomics, 15 (2014), p. S1.


\bibitem{sankoff97} D. Sankoff and M. Blanchette, \emph{The median problem for breakpoints in comparative genomics}, Computing and combinatorics, (1997), pp. 251--263.

\bibitem{sankoff96} D. Sankoff, G. Sundaram, and J. Kececioglu, \emph{Steiner points in the space of genome rearrangements}, International Journal of Foundations of Computer Science, 7 (1996), pp. 1--9.

\bibitem{tannier09} E. Tannier, C. Zheng D. Sankoff, \emph{Multichromosomal median and halving problems under different genomic distances}, BMC bioinformatics, 10 (2009), p. 120.


\bibitem{zheng11} C. Zheng and D. Sankoff, \emph{On the pathgroups approach to rapid small phylogeny}, BMC bioinformatics, 12 (2011), p. S4.

\end{thebibliography}
\end{document}